\definecolor{darkgreen}{rgb}{0,0.5,0}
\newfont{\sheaf}{eusm10 scaled\magstep1}
\newtheorem{definition}{Definition}[section]
\newtheorem{proposition}{Proposition}[section]
\newtheorem{corollary}[proposition]{Corollary}
\newtheorem{lemma}[proposition]{Lemma}
\newtheorem{theorem}[proposition]{Theorem}
\newtheorem{remark}{Remark}[proposition]
\newtheorem{notation}{Notation}[section]
\DeclareMathOperator{\PGL}{PGL}
\DeclareMathOperator{\GL}{GL}
\DeclareMathOperator{\Hom}{Hom}
\DeclareMathOperator{\Pic}{Pic}
\DeclareMathOperator{\Rk}{Rk}
\newcommand{\Z}{\mathbb{Z}}
\newcommand{\C}{\mathbb{C}}
\newcommand{\PP}{\mathbb{P}}
\newcommand{\OO}{\mathcal{O}}
\newcommand{\U}{\mathcal{U}}
\newcommand{\id}{id}
\def\BOX{\hfill\lower.5\baselineskip\hbox{$\Box$}}
\numberwithin{equation}{section}
\newcounter{nootje}
\renewcommand\check[1]
\begin{document}

\title[Vector bundles over reducible curves with a node]{On vector bundles over reducible curves with a node}

\author{Filippo F. Favale}
\address{Dipartimento di Matematica e Applicazioni,
	Universit\`a degli Studi di Milano-Bicocca,
	Via Roberto Cozzi, 55,
	I-20125 Milano, Italy}
\email{filippo.favale@unimib.it}

\author{Sonia Brivio}
\address{Dipartimento di Matematica e Applicazioni,
	Universit\`a degli Studi di Milano-Bicocca,
	Via Roberto Cozzi, 55,
	I-20125 Milano, Italy}
\email{sonia.brivio@unimib.it}

\date{\today}
\thanks{
\textit{2010 Mathematics Subject Classification}:  Primary:  14H60; Secondary: 14D20\\
\textit{Keywords}: Stability, Vector Bundles, Nodal curves. \\
Both authors are partially supported by INdAM - GNSAGA.\\
}

\begin{abstract}
Let $C$ be a  curve with two smooth components and a single node. Let $\mathcal{U}_C(r,w,\chi)$ be the moduli space of $w$-semistable classes of depth one sheaves on $C$ having rank $r$ on both components and Euler characteristic $\chi$. In this paper, under suitable assumptions, we produce a projective bundle over the product of the moduli spaces of semistable vector bundles of rank $r$ on each components and we show that it is birational to an irreducible component of $\mathcal{U}_C(r,w,\chi)$. Then we prove the rationality of the closed subset containing vector bundles with given fixed determinant.
\end{abstract}

\maketitle
\section*{Introduction}
Moduli spaces of vector bundles on curves have always been a central topic in Algebraic Geometry. The construction of moduli space of isomorphism classes of stable vector bundle of rank $r$ and degree $d$ on a smooth projective curve of   genus $g \geq 2$ is due to Mumford (see \cite{M}). Such a moduli space is a non-singular quasi-projective variety, whose compactification was  obtained  by Seshadri in  \cite{S0}, by introducing $S$-equivalence relation between semistable vector bundles, and it is denoted by ${\mathcal U}_C(r,d)$. The compactification is a normal irreducible projective variety of dimension $r^2(g-1)+1$. When $r$ and $d$ are coprime,  the notion of semistability coincides with that of stability, so ${\mathcal U}_C(r,d)$ parametrizes isomorphism classes of stable vector bundles. Moreover, in this case there exists a Poincar\'e bundle on ${\mathcal U}_C(r,d)$, see \cite{R}. If $L \in \Pic^{d}(C)$ is a line bundle, the moduli space  ${\mathcal SU}_C(r,L)$, parametrizing semistable vector bundles of rank $r$ and fixed determinat $L$, is also of great interest. Indeed, up to a finite \'etale covering, the moduli space ${\mathcal U}_C(r,d)$ is isomorphic to the product of ${\mathcal SU}_C(r,L)$ and $\Pic^0(C)$. Hence, a lot of the geometry of ${\mathcal U}_C(r,d)$ is encoded in ${\mathcal SU}_C(r,L)$. Moreover, ${\mathcal SU}_C(r,L)$ is interesting on its own and it is a rational variety when $r$ and $d$ are coprime, see \cite{KS}. The geometry of these moduli spaces has  been studied by many authors, in particular its  relation with generalized theta functions, see \cite{B} for a survey, (see \cite{BF1}, \cite{B1}, \cite{B2},  \cite{B3}, \cite{BB} and \cite{BV}   for recent works by the authors). 

Unfortunately, as soon as the base curve becomes singular, the above results do not apply anymore. For example, for a singular irreducible curve,  in order to have a compact moduli space one possible approach consists in considering torsion-free sheaves instead of locally
free, see \cite{N} and \cite{S}. 
This method was generalized for a reducible (but reduced) curve by Seshadri.
Its idea was to include in the moduli space also depth one sheaves and to  introduce the notion of polarization $w$ and of $w$-semistability. More precisely, we denote by ${\mathcal U}_C(w,r,\chi)$ the moduli space parametrizing $w$-semistable sheaves of depth one  of rank $r$  on each  components and Euler characteristic $\chi$.

In this paper we will assume that  $C$ is  a nodal reducible curve  with two smooth irreducible components $C_1$ and $C_2$, of genera $g_i \geq 1$ and a single node $p$. We can get the curve by gluing $C_1$ and $C_2$ at the points $q_1$ and $q_2$.  
In this hypothesis, the moduli space ${\mathcal U}_C(w,r,\chi)$  is a connected reducible projective variety, see \cite{T1} and \cite{T2}; each irreducible component has dimension $r^2(p_a(C) -1)+1$ and it  corresponds to a possible pair of multidegree, see Section \ref{sec2} for details. For  problems about the stability of Kernel bundles on such curves the reader can see \cite{BF2}. 

In the above hypothesis, for any $r \geq 2$, fix a pair of integers $(d_1,d_2)$ which are  both  coprime with $r$.
The existence of Poincar\'e  vector bundles, on the moduli spaces ${\mathcal U}_{C_i}(r,d_i)$, allows us to 
produce a  projective bundle $\pi:{\mathbb P}(\mathcal F)\to {\mathcal U}_{C_1}(r,d_1) \times  {\mathcal U}_{C_2}(r,d_2)$, whose fiber 
at $([E_1],[E_2])$ is ${\mathbb P}(\Hom(E_{1,q_1},E_{2,q_2}))$,  see Lemma \ref{LEM:projbundle}.  
Let  $u \in {\mathbb P}(\mathcal F)$, $u = ((E_1],[E_2]), [\sigma])$, where $\sigma$ is a non zero homomorphism  $ E_{1,q_1} \to E_{2,q_2}$. We can associate to $u$ a depth one sheaf $E_u$ on the curve $C$, which is obtained, roughly speaking, by gluing  $E_1$ and $E_2$ along the fibers at $q_1$ and $q_2$ with $\sigma$. This is a vector bundle if and only if $\sigma$ is an isomorphism. Our first concern is to study when $E_u$ turns out to be $w$-semistable for some polarization $w$: we are able to give some necessary and sufficient conditions to ensure $w$-semistability (see section \ref{sec3}).
Then we turn our attention to the rational map
$$
\xymatrix{
\phi:{\mathbb P}({\mathcal F}) \ar@{-->}[r]&  {\mathcal U}_C(w,r,\chi)
}
$$
sending $u$ to $E_u$. Our first result (Theorem \ref{THM:MAIN}) can be summarized  in the following statement:

{\bf Theorem A}
{\it Let $C$ be a reducible nodal curve as above.
Let  $r \geq 2$  and $d_1$ and $d_2$ be integer coprime with $r$. Set $\chi_i = d_i + r(1-g_i)$ and
$\chi = \chi_1 + \chi_2 -r$.
For any pair $(\chi_1,\chi_2)$ in a suitable non empty subset of $\mathbb{Z}^2$ 
there exists a polarization $w$ such that 
${\mathbb P}({\mathcal F})$ is birational to the irreducible component of the moduli space
${\mathcal U}_C(w,r,\chi)$ corresponding to bidegree $(d_1,d_2)$.}

The birational map of the statement is the map $\phi$. We prove that it is  an  injective morphism on the open  subset  ${\mathscr U}  \subset {\mathbb P}(\mathcal F) $, given by  points $u$ where $\sigma$ is an isomorphism.  The image $\phi(\mathscr{U})$ is a dense subset of the moduli space and its points are classes of 
vector bundles whose restriction to each component is stable (see Theorem \ref{THM:MAIN}). 

Moreover, when $g_i > r+ 1$, we can give some more information about the domain of $\phi$  as follows, see
Theorem \ref{THM:2}.

{\bf Theorem B}
{\it Assume that the hypothesis of  Theorem A hold. 
If  $g_i > r +1$, 
for any pair $(\chi_1,\chi_2)$ in a suitable non empty subset of $\mathbb{Z}^2$ there exists a non empty open subset $V_1 \times V_2$ of ${\mathcal U}_{C_1}(r,d_1)\times{\mathcal U}_{C_2}(r,d_2)$ and a polarization $w$ such that $\phi|_{\mathscr{U}\cup\mathscr{V}}$ is a morphism,  where we set $\mathscr{V}=\pi^{-1}(V_1 \times V_2)$.  
}

Then, in analogy with the smooth case, for any $L\in \Pic(C)$ we define the variety ${\mathcal SU}_C(w,r,L)$ which is, roughly, the closure in $\U_C(w,r,\chi)_{d_1,d_2}$ of the locus parametrising classes of vector bundles with fixed determinant $L$ where $d_i=\deg(L|_{C_i})$. When $r$ and $d_i$ are coprime, as in the smooth case, we obtain the following result,  see Theorem \ref{THM2}:
\hfill\par
{\bf Theorem C}
{\it Under the hypothesis of Theorem A, ${\mathcal SU}_C(w,r,L)$ is a rational variety.}

Recent results concerning rationality of  these moduli spaces on reducible  curves are obtained in \cite{DS18} and \cite{BDS16} in the case of rank two and in \cite{BhB14} for an integral irreducible nodal curve. 
\hfill\par

The paper is organized as follows. In Section \ref{sec1} we fix notations about reducible nodal curves. In Section \ref{sec2} we introduce the notion of depth one sheaves, of polarization and $w$-semistability and we recall general properties on their moduli spaces. In Section \ref{sec3} we introduce the projective bundle ${\mathbb P}({\mathcal F})$, we define the sheaf $E_u$ associated to  $u \in {\mathbb P}({\mathcal F})$ and we study when it is $w$-semistable.  In Section \ref{sec4} we prove Theorems A and B. Finally, in Section \ref{sec5} we deal 
with moduli spaces with fixed determinant and we prove Theorem C. 

{\it Acknowledgements} 

\noindent We would like to thank Alessandro Verra for 
comments on a preliminary version of this paper and the  referee for several valuable advices. We are grateful to  Prof. P.E. Newstead and Prof. A. Dey for suggesting us some   references.

\section{Nodal reducible curves}
\label{sec1}
In this paper we will consider nodal reducible complex projective curves with  two smooth irreducible components and one single node. Let $C$ be  such a curve, we  consider   a normalization map   
  $\nu : C_1 \sqcup C_2 \to C$,
where  $C_i$  is a smooth irreducible curve of genus $g_i \geq 1$. Hence $\nu^{-1}(x)$ is a single point except when $x $ is the node $p$ of $C$, in this case   $\nu^{-1}(p)=\{q_1,q_2\}$ with $q_j\in C_j$.  Since the restriction ${\nu}_{\vert C_i}$ is an isomorphism  we will identify $C_1$  and $C_2$ with the irreducible components of $C$.  
\hfill\par

Notice that $C$ can be embedded in a smooth surface $X$,
on which  $C$ is an effective divisor $C = C_1 + C_2$  with $C_1C_2 = 1$.
Let $J_C = O_X(-C)$ and $J_{C_{i}} = O_X(-C_i)$ be  the ideal sheaves of $C$ and $C_i$ respectively  in $X$, then we have the inclusion $J_C \subset J_{C_i}$. 
We have  the following commutative diagram
$$\xymatrix{
& 0\ar[r] & \OO_X(-C)\ar[r]\ar@{^{(}->}[d] & \OO_X(-C_2)\ar[r]\ar@{^{(}->}[d] & \OO_{C_1}(-C_2)\ar[r]  & 0 \\
 & & \OO_X\ar[r]^{\simeq }\ar@{->>}[d] & \OO_X\ar@{->>}[d] \\
 0\ar[r] & J_{C_2}/J_{C} \ar[r] & \OO_C\ar[r] & \OO_{C_2}\ar[r] & 0
}
$$
from which one deduces the isomorphism $J_{C_2}/J_{C} \simeq O_{C_1} (-C_2) $. This gives the exact sequence
\begin{equation}
\label{decomposition}
 0 \to O_{C_1} (-C_2)  \to O_C \to O_{C_2} \to 0.
\end{equation}
which is called the \textit{decomposition sequence of $C$}. From it we can compute the Euler characteristic of $O_C$:
 $$\chi(O_C)= \chi(O_{C_1}(-C_2)) + \chi(O_{C_2}).$$
Let $p_a(C) = 1 - \chi(O_C)$ be the  {\it arithmetic genus} of $C$, from the above relation we get that  
 $ p_a(C) = g_1+ g_2.$

\begin{notation}
\label{notazioni}
We will denote by $j_i \colon C_i \hookrightarrow C$ the natural inclusion of $C_i$ in $C$. We will denote by $\OO_{q_i}$ the stalk of $(j_i)_*\OO_{C_i}$ in $p$ and by $\OO_p$   the stalk of $\OO_C$ in $p$.
\end{notation}


\section{Moduli space of depth one  sheaves}
\label{sec2}
Let $C$ be a smooth irreducible  projective curve of genus $g \geq 1$.  The moduli space of semistable vector bundles of rank $r$ and degree $d$ on $C$ will be denoted by ${\mathcal U}_{C}(r,d)$. Its points are  $S$-equivalence classes of  semistable vector bundles  on the curve. We will denote by $[E]$ the class of  a vector bundle $E$. 
In \cite{S} it is proved that  ${\mathcal U}_{C}(r,d)$ is an irreducible and projective variety. Moreover, see \cite{S}, \cite{Tu}, we have:
\begin{equation}
\dim {\mathcal U}_{C}(r,d) = \begin{cases} r^2(g-1) +1 & g \geq 2 \\
\gcd(r,d) & g=1. 
\end{cases}
\end{equation}

In particular, when $r$ and $d$ are coprime, ${\mathcal U}_{C}(r,d)$ is a smooth variety, whose points parametrizes isomorphism classes of stable vector bundles. Moreover, for $g = 1$, we also have an isomorphism ${\mathcal U}_C(r,d)  \simeq C$ (see \cite{At} and \cite{Tu}). 
 \hfill\par
Let $C$ be a  nodal curve with a single node $p$ and two smooth irreducible components $C_1$ and $C_2$.  
To construct compactifications of moduli spaces of vector bundles on $C$  we introduce depth one sheaves, following  the approach of Seshadri, see  \cite{S}.

\begin{definition} A coherent sheaf $E$ on $C$  is of {\it depth one}  if every torsion section  vanishes identically on some components of $C$.
\end{definition}

A coherent sheaf $E$ on $C$ is of depth one if and only if the stalk at  the node $p$ is isomorphic to  $\OO_p^{a}\oplus \OO_{q_1}^{b}\oplus\OO_{q_2}^{c}$, see  \cite{S}. In particular, any vector bundle $E$ on $C$ is a sheaf of depth one.  If $E$ is a sheaf of depth one on $C$, then its restriction $E\vert_{C_i}$ is a  torsion free sheaf on $C_i \setminus p$ (possibly identically zero). Moreover, any subsheaf of $E$ is of depth one too. 
\hfill\par
Let $E$ be a sheaf of depth one on $C$. We define the {\it relative rank} of $E$ on the component $C_i$ as the rank of the restriction $E_i = E_{\vert C_i}$ of $E$ to  $C_i$
\begin{equation} r_i= \Rk(E_i)
\end{equation}
and the {\it multirank} of $E$ as the pair $(r_1,r_2)$.
We define the {\it relative degree} of $E$ with respect to the component $C_i$ as the degree of the restriction  $E_i$
\begin{equation}
d_i = deg(E_i) = \chi(E_i) - r_i\chi(O_{C_i}),
\end{equation}
where $\chi(E_i)$ is the Euler characteristic of $E_i$. The {\it multidegree} of $E$ is the pair $(d_1,d_2)$.

\begin{definition}
A {\it polarization} $w$ of $C$ is given by a pair of rational weights $(w_1,w_2)$ such that $0 < w_i < 1$ and $w_1 + w_2 = 1$. For any sheaf $E$ of depth one on $C$, of multirank $(r_1,r_2)$ and $\chi(E) = \chi$, we define  the {\it polarized slope} as
$$ \mu_w(E)= \frac{\chi}{w_1r_1+w_2r_2}.$$
\end{definition}

\begin{definition}
Let $E$ be a sheaf of depth one on $C$. $E$ is said $w$-{\it semistable} if for any subsheaf $F \subseteq E$  we have $\mu_w(F) \leq \mu_w(E)$;  $E$ is said $w$-{\it stable} if $\mu_w(F) < \mu_w(E)$ for all proper subsheaf $F$ of $E$.
\end{definition}

For each  $w$-semistable sheaf $E$ of depth  one on $C$ 
 there exists a finite filtration of sheaves of depth one on $C$:
$$ 0= E^0 \subset E^1 \subset E^2 \subset \dots \subset E^k = E,$$
such that each quotient $E^i / E^{i-1}$ is a $w$-stable sheaf of depth one on $C$ with polarized slope $\mu_w(E^i / E^{i-1}) = \mu_w(E)$.
This is called  a {\it Jordan-Holder filtration} of $E$. 
The sheaf 
$$Gr_w(E) =\oplus_{i=1}^kE^i /E^{i-1}$$
is said the {\it graduate sheaf associated to} $E$ 
and it depends only on the isomorphism class of $E$.
Let $E$ and $F$ be $w$-semistable  sheaves of depth one on $C$. We say that  $E$ and $F$ are $S_w$-equivalent if and only if 
$Gr_w(E) \simeq  Gr_w(F)$.  If $E$ and $F$ are $w$-stable sheaves  then $S_w$-equivalence is just isomorphism, as  in  the smooth case.
\hfill\par
There exists a moduli space ${\mathcal U}^s_C(w,(r_1,r_2),\chi)$   parametrizing isomorphism classes of  $w$-stable  sheaves of depth one 
 on $C$ of multirank $(r_1,r_2)$ and given Euler characteristic $\chi$, see \cite{S}.
It has  a  natural compactification     ${\mathcal U}_C(w,(r_1,r_2),\chi)$,  whose points   correspond to $S_w$-equivalence classes   
of $w$-semistable    sheaves of depth one on $C$   of multirank $(r_1,r_2)$ and given Euler characteristic $\chi$. In particular, when $r_1= r_2 = r$,  we denote by   ${\mathcal U}_C(w,r,\chi)$ the corresponding moduli space. In this case we have the following result (see \cite{T1} and \cite{T2}):
\begin{theorem}
\label{modulispace}
Let $C$ be a nodal curve  with  a single node $p$ and two smooth  irreducible components $C_i$ of genus $g_i \geq 1$, $i = 1,2$.
 For a generic polarization $w$  we have the following properties:
\hfill\par\noindent
\begin{enumerate}
\item{} any $w$-stable vector bundle $E \in {\mathcal U}_C(w,r,\chi)$  satisfies the following condition:
\begin{equation}
 \label{eq:irreduciblecomponent}   
 w_i \chi(E) \leq \chi (E_i) \leq w_i \chi(E) + r,
\end{equation}
\noindent
where $E_i$ is the restriction of  $E$ to $C_i$;
\item{}  if a vector bundle $E$ on $C$ satisfies  the above condition  for $i = 1,2$  and the restrictions  $E_1$ and $E_2$ are semistable vector bundles, then $E$ is $w$-semistable. 
Moreover, if at least one of the restrictions is stable, then $E$ is $w$-stable;
\item{} the moduli space ${\mathcal U}_C(w,r,\chi)$ is connected, each irreducible component has dimension
$ r^2(p_a(C) -1) +1$ and it corresponds to the choice of a multidegree $(d_1,d_2)$ satisfying conditions 
\ref{eq:irreduciblecomponent}.
\end{enumerate}
\end{theorem}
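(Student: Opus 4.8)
The plan is to treat the three assertions separately. Parts (1) and (2) are purely numerical and follow from slope comparisons against a few canonical subsheaves, while part (3) is a global statement that I would reduce to a parameter count plus deformation theory. Throughout I use that a sheaf of multirank $(r,r)$ has $w_1r+w_2r=r$, so $\mu_w(E)=\chi/r$, and I tensor the decomposition sequence \eqref{decomposition} with the vector bundle $E$ to obtain the two canonical exact sequences $0\to E_1(-q_1)\to E\to E_2\to 0$ and $0\to E_2(-q_2)\to E\to E_1\to 0$, where $E_i=E|_{C_i}$ and $E_i(-q_i)$ is supported on $C_i$ with $\chi(E_i(-q_i))=\chi_i-r$.

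For part (1), let $E$ be $w$-stable. The subsheaf $E_i(-q_i)$ has multirank $(r,0)$ or $(0,r)$, hence weighted rank $w_ir$, so $w$-stability forces $\mu_w(E_i(-q_i))=\frac{\chi_i-r}{w_ir}<\frac{\chi}{r}$, i.e. $\chi_i<w_i\chi+r$; dually, viewing $E_i$ as a quotient in the other sequence gives $\frac{\chi_i}{w_ir}>\frac{\chi}{r}$, i.e. $\chi_i>w_i\chi$. Together these give $w_i\chi<\chi_i<w_i\chi+r$, which is \eqref{eq:irreduciblecomponent} (in fact with strict inequalities).

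For part (2), take a depth-one subsheaf $F\subseteq E$ of multirank $(s_1,s_2)$, $0\le s_i\le r$, let $\bar F_i$ be its image in $E_i$, and intersect $F$ with the two kernels above to get $0\to F\cap E_1(-q_1)\to F\to \bar F_2\to 0$ and $0\to F\cap E_2(-q_2)\to F\to \bar F_1\to 0$, where $F\cap E_1(-q_1)\subseteq E_1(-q_1)$ has rank $s_1$ and $\bar F_2\subseteq E_2$ has rank $s_2$ (and symmetrically). Since $E_1,E_2$ are semistable, so are the twists $E_i(-q_i)$, and comparing slopes on the smooth components yields the two bounds $\chi(F)\le\frac1r\bigl(s_1(\chi_1-r)+s_2\chi_2\bigr)$ (Bound A) and $\chi(F)\le\frac1r\bigl(s_1\chi_1+s_2(\chi_2-r)\bigr)$ (Bound B). Setting $\delta_i=\chi_i-w_i\chi$, part (1) gives $0\le\delta_i\le r$, and $\chi=\chi_1+\chi_2-r$ forces $\delta_1+\delta_2=r$. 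A direct computation shows that the desired inequality $\chi(F)\le\frac{\chi}{r}(w_1s_1+w_2s_2)$ follows from Bound A with slack $\frac1r\delta_2(s_1-s_2)$ and from Bound B with slack $\frac1r\delta_1(s_2-s_1)$. Hence using Bound A when $s_1\ge s_2$ and Bound B when $s_1\le s_2$ always gives $\mu_w(F)\le\mu_w(E)$, proving $w$-semistability. For the stability refinement I track equality: for generic $w$ one has $w_i\chi\notin\Z$, so $0<\delta_i<r$ strictly, which makes the slack strictly positive whenever $s_1\ne s_2$ or $s_1\in\{0,r\}$, excluding such $F$; the only remaining case $0<s_1=s_2<r$ forces a component bound like $\chi(\bar F_1)=\frac{s_1}{r}\chi_1$ to be tight, which is impossible for a proper subsheaf once one of $E_1,E_2$ is stable. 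Thus one stable restriction upgrades $w$-semistability to $w$-stability.

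For part (3), the multidegree $(d_1,d_2)$ (equivalently $(\chi_1,\chi_2)$) is a locally constant invariant, and by (1) only those satisfying \eqref{eq:irreduciblecomponent} occur, organizing the space into strata indexed by admissible multidegrees. On the open stratum of vector bundles with stable restrictions each sheaf is the gluing of $(E_1,E_2)$ along an isomorphism $E_{1,q_1}\cong E_{2,q_2}$ modulo scalars, so this locus fibres over the irreducible $\mathcal U^s_{C_1}(r,d_1)\times\mathcal U^s_{C_2}(r,d_2)$ with fibre $\PGL_r$, and counting dimensions gives $[r^2(g_1-1)+1]+[r^2(g_2-1)+1]+(r^2-1)=r^2(g_1+g_2-1)+1=r^2(p_a(C)-1)+1$, which by deformation theory (the tangent space being the relevant $\Ext^1$) is the dimension of the component. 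I expect the genuinely hard part to be precisely the global claims in (3): that each admissible stratum has irreducible closure of the expected dimension and that these closures assemble into a connected variety. Irreducibility would come from the irreducibility of the parameter space above together with density of the locally free locus, and connectedness from the fact that distinct multidegrees are joined through boundary sheaves that fail to be locally free at the node; carrying this out rigorously requires the full GIT/deformation construction of the moduli functor, which is exactly the content of \cite{T1} and \cite{T2}, and I would invoke those results rather than reprove them.
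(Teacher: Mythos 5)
There is no internal proof to compare against here: the paper does not prove Theorem \ref{modulispace} at all, but imports it from \cite{T1} and \cite{T2} with a bare citation. Measured against that, your proposal is correct and actually does more than the paper: your self-contained verification of parts (1) and (2) is sound. Tensoring the decomposition sequence \eqref{decomposition} with $E$ to produce the canonical subsheaves $E_i(-q_i)$ of weighted rank $w_ir$, comparing their $w$-slopes and those of the quotients $E_i$ with $\mu_w(E)=\chi/r$, gives (1); and for (2) your two bounds A and B, rewritten via $\delta_i=\chi_i-w_i\chi$ with $\delta_1+\delta_2=r$ and the case split $s_1\geq s_2$ versus $s_1\leq s_2$, is a clean convexity argument. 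It is worth observing that this is essentially the machinery the paper itself deploys later for its own constructions: your kernels $F\cap E_i(-q_i)$ are the sheaves $K_1,K_2$ of diagram \eqref{EQ:DEFsigmatilde}, your part-(1) computation is that of Lemma \ref{necessary condition} in the special case $k=r$, and your Bound A/B estimate for an arbitrary depth-one subsheaf is the convex-combination estimate in the proof of Proposition \ref{PROP:STABILITY} (there carried out for $\Rk\sigma=k\leq r-1$, which is exactly why the paper must add $(0,k)$- and $(0,r)$-stability hypotheses that you can drop in the locally free case). Your deferral of the global assertions in (3) --- irreducibility of each multidegree component, the dimension count via GIT and deformation theory, and connectedness through non-locally-free boundary sheaves --- to \cite{T1} and \cite{T2} is the right call and is precisely what the paper does.

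Two small repairs, neither affecting the substance. First, the genericity claim $w_i\chi\notin\mathbb{Z}$ fails when $\chi=0$: then $\delta_i=\chi_i$ is an integer independent of $w$, so the strictness $0<\delta_i<r$ needed for the stability refinement must be imposed on the multidegree rather than extracted from genericity of the polarization; this is consistent with the fact that on the boundary of \eqref{eq:irreduciblecomponent} a bundle is at best strictly $w$-semistable (e.g.\ $\chi_1=w_1\chi$ forces $\mu_w(E_2(-q_2))=\mu_w(E)$ no matter how stable the restrictions are). Second, your assertion that the slack is strictly positive whenever $s_1\neq s_2$ \emph{or} $s_1\in\{0,r\}$ is false in the case $s_1=s_2=r$, where the slack $\delta_i(s_1-s_2)/r$ vanishes identically; that case needs the separate (easy) observation that a proper subsheaf $F\subsetneq E$ of full multirank satisfies $\chi(F)<\chi(E)$, because the quotient is a nonzero torsion sheaf of positive length, while $s_1=s_2=0$ forces $F=0$ by the depth-one condition.
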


\begin{definition} We denote by ${\mathcal U}_C(w,r,\chi)_{d_1,d_2}$
 the irreducible component of   ${\mathcal U}_C(w,r,\chi)$ corresponding to the multidegree $(d_1,d_2)$.
\end{definition}

\section{Construction of  depth one sheaves.}  
\label{sec3}

In this section  we deal with  construction of depth one sheaves on a  nodal curve $C$ with two irreducible components and a single node. We begin with the following lemma: 
\begin{lemma}
\label{LEM:projbundle}
Let $C_1$ and $C_2$ be smooth complex projective curves of genus  $g_i\geq 1$, $i = 1,2$, and $q_i \in C_i$. Fix $r\geq 2$ and $d_1,d_2\in \Z$ such that $r$ is coprime with both $d_1$ and $d_2$. Then, there exists a projective bundle $$\pi:\PP(\mathcal{F})\rightarrow \mathcal{U}_{C_1}(r,d_1)\times \mathcal{U}_{C_2}(r,d_2)$$
such that the fiber over $([E_1],[E_2])$ is
$\PP(\Hom(E_{1,q_1},E_{2,q_2}))$, where $E_{i,q_i}$ is the fiber  of $E_i$ at the point $q_i$. 
\end{lemma}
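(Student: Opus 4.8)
**The plan is to construct $\PP(\mathcal{F})$ as the projectivization of a vector bundle obtained by combining the two Poincaré bundles via a relative Hom sheaf.** Since $r$ is coprime with $d_i$, the moduli space $\mathcal{U}_{C_i}(r,d_i)$ is a fine moduli space, so there exists a Poincaré (universal) bundle $\mathcal{E}_i$ on $C_i \times \mathcal{U}_{C_i}(r,d_i)$. The key object will be the restriction of $\mathcal{E}_i$ to the slice $\{q_i\} \times \mathcal{U}_{C_i}(r,d_i)$, which is a vector bundle of rank $r$ on $\mathcal{U}_{C_i}(r,d_i)$ whose fiber at $[E_i]$ is exactly $E_{i,q_i}$. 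Call this $\mathcal{E}_{i,q_i}$.

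\emph{First I would} form the product $M = \mathcal{U}_{C_1}(r,d_1) \times \mathcal{U}_{C_2}(r,d_2)$ with its two projections $p_i : M \to \mathcal{U}_{C_i}(r,d_i)$, and pull back each slice bundle to obtain $p_1^* \mathcal{E}_{1,q_1}$ and $p_2^* \mathcal{E}_{2,q_2}$ on $M$. Then I would set
$$
\mathcal{F} = \mathcal{H}om\big(p_1^* \mathcal{E}_{1,q_1},\, p_2^* \mathcal{E}_{2,q_2}\big) \cong (p_1^* \mathcal{E}_{1,q_1})^\vee \otimes p_2^* \mathcal{E}_{2,q_2}.
$$
This is a locally free sheaf on $M$ of rank $r^2$, and by construction its fiber at the point $([E_1],[E_2])$ is canonically $\Hom(E_{1,q_1}, E_{2,q_2})$. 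Taking the projective bundle $\pi : \PP(\mathcal{F}) \to M$ then yields, by the standard compatibility of projectivization with base change, a fiber over $([E_1],[E_2])$ equal to $\PP(\Hom(E_{1,q_1}, E_{2,q_2}))$, which is exactly the claim.

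\emph{The main point to verify} is the existence of the Poincaré bundles and the fact that the slice bundles $\mathcal{E}_{i,q_i}$ are genuinely locally free. The former is guaranteed precisely by the coprimality of $r$ and $d_i$ (the reference \cite{R} cited in the introduction provides this); under coprimality $\mathcal{U}_{C_i}(r,d_i)$ parametrizes stable bundles and is a fine moduli space. Local freeness of $\mathcal{E}_{i,q_i}$ follows because $\mathcal{E}_i$ is a vector bundle on $C_i \times \mathcal{U}_{C_i}(r,d_i)$ and restricting to a slice $\{q_i\} \times \mathcal{U}_{C_i}(r,d_i)$ preserves local freeness.

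\emph{The hard part} is really only a bookkeeping subtlety rather than a deep obstruction: the Poincaré bundle is unique only up to twisting by a line bundle pulled back from $\mathcal{U}_{C_i}(r,d_i)$, so $\mathcal{F}$ itself is not canonical. However, since projectivization is insensitive to tensoring by a line bundle from the base — $\PP(\mathcal{F}) \cong \PP(\mathcal{F} \otimes \mathcal{L})$ for any line bundle $\mathcal{L}$ on $M$ — the projective bundle $\PP(\mathcal{F})$ is well-defined independently of the choice. I would note this explicitly to confirm that the construction is canonical at the level of $\PP(\mathcal{F})$, and then the description of the fibers completes the proof.
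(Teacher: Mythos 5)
Your proposal is correct and takes essentially the same approach as the paper: restrict the Poincar\'e bundles to the slices at $q_i$, pull back along the projections, form the rank-$r^2$ bundle $\mathcal{F}=\mathcal{H}om\bigl(p_1^*\mathcal{E}_{1,q_1},p_2^*\mathcal{E}_{2,q_2}\bigr)$, and projectivize; your additional remark that $\PP(\mathcal{F})$ is insensitive to the twisting ambiguity of the Poincar\'e bundle is a worthwhile point the paper leaves implicit. One minor citation caveat: for $g_i=1$ the existence of the universal bundle does not come from \cite{R} (which concerns $g\geq 2$) but, as the paper notes, from the isomorphism $\mathcal{U}_{C_i}(r,d_i)\simeq C_i$.
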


\begin{proof}
We recall that, as $r$ and $d_i$ are coprime, there exists a  Poincar\'e bundle $\mathcal{P}_i$ for the moduli space of semistable vector bundles on $C_i$ of rank $r$ and degree $d_i$,
i.e. a vector bundle $\mathcal{P}_i$ on $\mathcal{U}_{C_i}(r,d_i) \times C_i$ such that $\mathcal{P}_i\vert_{[E_i] \times C_i} \simeq E_i$, under the identification $[E_i]\times C_i\simeq C_i$. This follows from a result of  \cite{R} if $g_i \geq 2$  and from the isomorphism ${\mathcal U}_{C_i}(r,d_i) \simeq C_i$  when  $g_i= 1$. 
\hfill\par
For $i= 1,2$, consider the natural  inclusion
$$\iota_i:\mathcal{U}_{C_i}(r,d_i) \times q_i \hookrightarrow  \mathcal{U}_{C_i}(r,d_i) \times C_i,$$
and the pull back ${{\iota}_i}^*({\mathcal P}_i)$ of the Poincar\'e bundle.
Since $\mathcal{U}_{C_i}(r,d_i) \times q_i $ is isomorphic to $\mathcal{U}_{C_i}(r,d_i)$, ${{\iota}_i}^*({\mathcal P}_i)$ can be seen as a vector bundle on $\mathcal{U}_{C_i}(r,d_i)$ of rank $r$ whose fiber at $[E_i]$ is actually $E_{i,q_i}$. 
\hfill\par
Note that the product $\mathcal{U}_{C_1}(r,d_1)\times \mathcal{U}_{C_2}(r,d_2)$ 
is a smooth irreducible variety. 
Let ${p}_1$ and $p_{2}$ denote the projections of the product  onto  factors.  We define on $\mathcal{U}_{C_1}(r,d_1)\times \mathcal{U}_{C_2}(r,d_2)$ the following sheaf: 
\begin{equation}
\label{F}
     \mathcal F \colon = {\mathcal Hom}(p_1^*({{\iota}_1}^*({\mathcal P}_1)), p_2^*({{\iota}_2}^*({\mathcal P}_2))).
 \end{equation}
By construction, $\mathcal{F}$ is a vector bundle of rank $r^2$ whose fiber at the point $([E_1],[E_2])$ is $\Hom(E_{1,q_1},E_{2,q_2})$. By taking the associated projective bundle we conclude the proof.
\end{proof}

\noindent Let $C_1$ and $C_2$ be smooth irreducible curves, we consider a nodal curve $C$ with two smooth components and  a single node $p$ which is obtained by identifying the points $q_1\in C_1$ and $q_2\in C_2$. Let  $E_i$ be a stable vector bundle of rank $r$  and  degree $d_i$ on $C_i$ and consider 
a non zero homomorphism $\sigma:E_{1,q_1}\to E_{2,q_2}$  between the fibres. Assume that the rank of $\sigma$ is $k$,  with $ 1 \leq k \leq r$. We can associate to these  data a depth one sheaf on the nodal curve $C$, roughly speaking, by gluing the vector bundles $E_1$ and $E_2$ along the fibers (at $q_1$ and $q_2$ respectively) with the homomorphism $\sigma$, as follows. 

\noindent 
Let $j_p$ be the inclusion of $p$ in $C$ and let
$j_i \colon C_i \to C$ be the inclusion of $C_i$ in $C$, for $i = 1,2$.  
The sheaf ${j_i}_*E_i$ is a depth one sheaf on $C$ whose stalk at $p$ is the stalk of $E_i$ at  $q_i$. Hence, there is a natural surjective map given by restriction onto the fiber of $E_i$ at $q_i$, i.e. the map
$$\rho_i \colon {j_i}_*E_i \to E_{i,q_i}.$$
The sheaf   ${j_1}_* (E_1) \oplus {j_2}_*(E_2)$ is of depth one on $C$  and we have a  surjective map  
$$\rho_1 \oplus \rho_2 \colon {j_1}_*E_1 \oplus {j_2}_* E_2 \to E_{1,q_1}\oplus E_{2,q_2}.$$
The sheaf  ${j_p}_*{j_p}^*{j_2}_*(E_2)$ has depth one too, and it is a skyscraper sheaf over $p$ whose stalk is $E_{2,q_2}$. 
So we have again a surjective map 
$$\rho :{j_p}_*{j_p}^*{j_2}_*(E_2) \to E_{2,q_2}.$$
Let $\sigma \colon E_{1,q_1} \to E_{2,q_2}$ be a non zero homomorphism and consider the induced surjective map 
$$ \sigma \oplus id \colon E_{1,q_1} \oplus E_{2,q_2} \to Im(\sigma) \oplus E_{2,q_2}.$$
We have, moreover, the map 
$$\delta:Im(\sigma)\oplus E_{2,q_2}\to E_{2,q_2}$$
which sends $(u,v)$ to $u-v$. We denote by $\Delta \subset  Im(\sigma)\oplus Im(\sigma)$ the diagonal. By construction we have $\Delta \simeq {\mathbb C}_p^k$. 

Finally we define the map  of sheaves
$$\tilde{\sigma}:{j_1}_*(E_1) \oplus {j_2}_*(E_2) \to {j_p}_*{j_p}^*{j_2}_*(E_2)$$ by requiring that the following diagram commutes.

\begin{equation}
\label{EQ:DEFsigmatilde}
\xymatrix@R=1pc{
&
    K_1\oplus K_2 \ar@{=}[rr]\ar@{^{(}->}[d] & 
    &
    K_1\oplus K_2 \ar@{^{(}->}[d] \\
0 \ar[r] &
    \ker \tilde{\sigma} \ar[rr]\ar@{->>}[dd] &
    &
    {j_1}_*(E_1) \oplus {j_2}_*(E_2) \ar@{->>}[dd]\ar[r]^-{\tilde{\sigma}}
        \ar@/_1pc/[ld]^-{\rho_1\oplus\rho_2}&
    {j_p}_*{j_p}^*{j_2}_*(E_2)\ar@{->>}[dd]^-{\rho}\ar[r] &
    0 \\
&
    &
    E_{1,q_1}\oplus E_{2,q_2}\ar@/_1pc/[rd]^-{\sigma\oplus\id} \\
0 \ar[r] & 
    \Delta \ar[rr] &
    &
    Im(\sigma)\oplus E_{2,q_2} \ar[r]_-{\delta} &
    E_{2,q_2}\ar[r] &
    0
}\end{equation}

It follows immediately by construction  that $\ker \tilde{\sigma}$ is a sheaf of depth one on $C$, which coincides with $E_i$ on $C_i \setminus p$. One can easily see that the isomorphism class of $\ker \tilde{\sigma}$ does not depend on the isomorphism classes of $E_i$. Moreover, the same happens if one uses $\sigma'=\lambda\sigma$ with $\lambda\in \C^*$, instead of $\sigma$. 
\vspace{2mm}

\noindent From now on, we will assume to be under the hypotesis of Lemma \ref{LEM:projbundle}. Let $\PP(\mathcal{F})$ be the projective bundle on ${\mathcal U}_{C_1}(r,d_1) \times {\mathcal U}_{C_2}(r,d_2)$.
We can conclude that the construction of $\ker \tilde{\sigma}$ depends on the data contained in $u = (([E_1],[E_2]), [\sigma])\in \PP(\mathcal{F})$ and not on the particular choices of $E_1,E_2$ and $\sigma$.

\begin{definition}
We will denote  by $E_u$ the  kernel of $\tilde{\sigma}$ defined by $u \in {\mathbb P}({\mathcal F})$.  \end{definition}

The above construction gives the following: 

\begin{proposition}
\label{construction}
Let $E_u$  be the sheaf defined by $u = (([E_1],[E_2]), [\sigma]) \in {\mathbb P}({\mathcal F})$.  Then $E_u$ is a depth one sheaf on $C$ with $\chi(E_u)= \chi(E_1) + \chi(E_2) -r$ and  multirank $(r,r)$. It is a vector bundle if and only if $\sigma $ is an isomorphism. In this case,  ${E_u}_{\vert C_i} = E_i$. 
\end{proposition}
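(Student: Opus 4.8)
The plan is to read off all four assertions directly from the defining diagram \eqref{EQ:DEFsigmatilde}, since $E_u=\ker\tilde\sigma$ is constructed there. First I would establish that $E_u$ is of depth one. The sheaf ${j_1}_*(E_1)\oplus{j_2}_*(E_2)$ is of depth one on $C$ (each summand is the pushforward of a locally free sheaf from a component, hence has the stalk structure at $p$ described after the definition of depth one sheaves), and any subsheaf of a depth one sheaf is again of depth one, as noted in Section \ref{sec2}; since $E_u$ is by construction a subsheaf, depth one is immediate. The claim that $E_u$ agrees with $E_i$ away from the node was already observed right after the diagram: $\tilde\sigma$ is supported at $p$, so on $C_i\setminus p$ nothing is modified and $E_u\vert_{C_i\setminus p}\cong E_i\vert_{C_i\setminus p}$; in particular the multirank is $(r,r)$.

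Next I would compute the Euler characteristic. The left column of \eqref{EQ:DEFsigmatilde} is the short exact sequence
\begin{equation}
0\to E_u \to {j_1}_*(E_1)\oplus{j_2}_*(E_2)\xrightarrow{\ \tilde\sigma\ } {j_p}_*{j_p}^*{j_2}_*(E_2)\to 0,
\end{equation}
where the last term is a skyscraper sheaf at $p$ with stalk $E_{2,q_2}$, hence of Euler characteristic $r$. By additivity of $\chi$ on short exact sequences, and using $\chi({j_i}_*E_i)=\chi(E_i)$ (the pushforward along a closed immersion is exact and preserves cohomology), I obtain
\begin{equation}
\chi(E_u)=\chi(E_1)+\chi(E_2)-r,
\end{equation}
as claimed.

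It remains to characterise when $E_u$ is locally free and to identify its restrictions. Away from $p$ this is clear, so the whole question is local at the node, and this is the step I expect to require the most care. At $p$ the stalk of ${j_1}_*E_1\oplus{j_2}_*E_2$ is $\OO_{q_1}^{\,r}\oplus\OO_{q_2}^{\,r}$, and $\tilde\sigma$ cuts out $E_u$ as the fibre product determined by the bottom row of the diagram, whose fibre at $p$ is governed by $\sigma\oplus\id$ and the diagonal $\Delta\cong\C_p^{k}$, where $k=\rank(\sigma)$. I would analyse the stalk $(E_u)_p$ as the kernel of $\tilde\sigma$ and show that it is a free $\OO_p$-module of rank $r$ precisely when $k=r$, i.e. when $\sigma$ is an isomorphism: if $k=r$ the gluing identifies the two fibres completely and one recovers the standard local model $\OO_p^{\,r}$ of a rank $r$ vector bundle on a nodal curve, whereas if $k<r$ the stalk contains a nontrivial $\OO_{q_1}\oplus\OO_{q_2}$ summand and $E_u$ fails to be locally free. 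Finally, when $\sigma$ is an isomorphism the identification on $C_i\setminus p$ extends across $p$, so $E_u\vert_{C_i}\cong E_i$, completing the proof.
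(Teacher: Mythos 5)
Your proposal is correct and follows essentially the same route as the paper: both arguments read everything off diagram \eqref{EQ:DEFsigmatilde}, using the middle-row short exact sequence $0\to E_u\to {j_1}_*E_1\oplus{j_2}_*E_2\to{j_p}_*{j_p}^*{j_2}_*(E_2)\to 0$ and an analysis of the stalk at the node (which the paper phrases via the depth-one classification, concluding $E_u|_p\simeq \OO_p^{k}\oplus\OO_{q_1}^{r-k}\oplus\OO_{q_2}^{r-k}$ with $k=\Rk\sigma$) to get local freeness exactly when $\sigma$ is an isomorphism. Your explicit verifications of depth one (as a subsheaf of a depth-one sheaf) and of $\chi(E_u)=\chi(E_1)+\chi(E_2)-r$ (additivity plus $\chi=r$ for the skyscraper) are correct and merely spell out steps the paper leaves implicit.
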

\begin{proof}
Let $\Rk (\sigma ) = k$.  Since $E_u$ is a depth one sheaf,  the stalk of $E_u$ at the node $p$ is isomorphic to
$$ \OO_{p}^{a}\oplus\OO_{q_1}^{b}\oplus\OO_{q_1}^{c},$$ where $a+b=\Rk(E_u|C_1)= r$ and $a+c=\Rk(E_u|C_2) = r$  (see Section \ref{sec2}). From the diagram \ref{EQ:DEFsigmatilde}, it follows that the rank of the free part of the stalk of $E_u$ in $p$ is $k$, so $a=k$. Hence we have
$E_u|_p \simeq \OO_p^{k}\oplus \OO_{q_1}^{r-k}\oplus\OO_{q_2}^{r-k}$. In particular, $E_u$ is a vector bundle if and only if $k=r$, i.e., exactly when $\sigma$ is an isomorphism.
\end{proof}

In order to obtain a $w$-semistable sheaf, for some polarization $w$,  we have the following necessary condition:
\begin{lemma}
\label{necessary condition}
Let $E=E_u$ be the sheaf defined by $u = (([E_1],[E_2]), [\sigma]) \in {\mathbb P}({\mathcal F})$ and let $k$ be the rank of $\sigma$. Then, if $E$ is $w$-semistable for some $w$, the following conditions are satisfied:
 \begin{equation}
 \label{eqnecessary}
 \begin{cases}
 \chi(E) w_1 \leq \chi(E_1) \leq \chi(E) w_1 + k,
 \\
 \chi(E) w_2 + r-k \leq \chi(E_2) \chi(E) w_2 + r.
 \end{cases}
 \end{equation}
\end{lemma}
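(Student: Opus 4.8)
The plan is to test $w$-semistability against two explicit subsheaves of $E$ supported on the two components, and then to bootstrap the remaining estimates from the Euler characteristic relation of Proposition \ref{construction}.

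First I would record two elementary reductions. Since $E=E_u$ has multirank $(r,r)$ and $w_1+w_2=1$, its polarized rank is $w_1r+w_2r=r$, so that $\mu_w(E)=\chi(E)/r$. Next, by Proposition \ref{construction} we have $\chi(E_1)+\chi(E_2)=\chi(E)+r$; using this relation, the lower bound in the first line of \eqref{eqnecessary} is equivalent to the upper bound in the second line, and the upper bound in the first line is equivalent to the lower bound in the second. It therefore suffices to prove the two upper bounds $\chi(E_1)\leq w_1\chi(E)+k$ and $\chi(E_2)\leq w_2\chi(E)+r$. Finally, since every subsheaf of a depth one sheaf is again of depth one (Section \ref{sec2}), any subsheaf of $E$ is an admissible test object for $w$-semistability.

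For the first bound I would consider the subsheaf $F_1\subseteq E$ consisting of the sections supported on $C_1$. Using the description $E=\ker\tilde\sigma=\{(s_1,s_2):\sigma(s_1(q_1))=s_2(q_2)\}$ read off from diagram \eqref{EQ:DEFsigmatilde}, the condition $s_2=0$ forces $s_1(q_1)\in\ker\sigma$. Thus $F_1=j_{1*}\ker\bigl(E_1\twoheadrightarrow E_{1,q_1}/\ker\sigma\bigr)$, where the quotient on the right is a skyscraper of length $k=\Rk(\sigma)$ at $q_1$. Hence $F_1$ has multirank $(r,0)$ and $\chi(F_1)=\chi(E_1)-k$, so $\mu_w(F_1)=(\chi(E_1)-k)/(w_1r)$; the inequality $\mu_w(F_1)\leq\mu_w(E)$ then reads $\chi(E_1)-k\leq w_1\chi(E)$, which is exactly the desired upper bound. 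Symmetrically, I would take the subsheaf $F_2\subseteq E$ of sections supported on $C_2$; the defining condition now forces $s_2(q_2)=0$, so $F_2=j_{2*}(E_2\otimes\OO_{C_2}(-q_2))$, of multirank $(0,r)$ and $\chi(F_2)=\chi(E_2)-r$. Testing $\mu_w(F_2)\leq\mu_w(E)$ gives $\chi(E_2)\leq w_2\chi(E)+r$, and combining the two upper bounds with $\chi(E_1)+\chi(E_2)=\chi(E)+r$ yields the two remaining inequalities of \eqref{eqnecessary}.

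The only genuinely delicate point is the bookkeeping in the construction of $F_1$: one must check that imposing $s_2=0$ cuts the fibre of $E_1$ at $q_1$ exactly by the rank $k$ of $\sigma$ (through the condition $s_1(q_1)\in\ker\sigma$, with $\dim\ker\sigma=r-k$), and not by the full rank $r$. This is precisely where the term $k$ enters \eqref{eqnecessary}; once the invariants of $F_1$ and $F_2$ are pinned down, everything else is a routine slope comparison.
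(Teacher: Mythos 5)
Your proof is correct and follows essentially the same route as the paper: your subsheaves $F_1$ and $F_2$ are precisely the kernels $K_1=\ker(\sigma\circ\rho_1)$ and $K_2=\ker(\rho_2)$ from diagram \eqref{EQ:DEFsigmatilde} that the paper tests against $w$-semistability, with the same computations $\chi(K_1)=\chi(E_1)-k$ and $\chi(K_2)=\chi(E_2)-r$. The only cosmetic difference is organizational: you observe up front that the relation $\chi(E_1)+\chi(E_2)=\chi(E)+r$ pairs the four inequalities so only the two upper bounds need proving, while the paper derives the two lower bounds by substitution after each upper bound --- the content is identical.
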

\begin{proof}
Assume that $E$ is $w$-semistable for a polarization $w$. Let  $K_1$ be  the kernel of the map 
$$\sigma \circ \rho_1 \colon {j_1}_*E_1  \to Im \sigma,$$
and $K_2$ be  the kernel of the map
 $ \rho_2 \colon {j_2}_*E_2  \to E_{2,q_2}$, 
as in diagram \ref{EQ:DEFsigmatilde}. Since $K_i$ is a subsheaf of $E$, then by $w$-semistability of $E$ we   must have $ \mu_w(K_i) \leq \mu_w(E)$.
We have:    $\mu_w(K_1) = \frac{\chi(K_1)}{w_1r} = \frac{\chi(E_1)-k}{w_1r} \leq \frac{\chi(E)}{r}$, 
which implies 
$$ \chi(E_1)  \leq \chi(E)w_1 + k.$$
By replacing $\chi(E_1) = \chi(E) - \chi(E_2) +r$ in the above inequality, we obtain:
$$\chi(E_2) \geq \chi(E) w_2 + r-k.$$
Finally, we have $\mu_w(K_2) = \frac{\chi(K_2)}{w_2r} = \frac{\chi(E_2)-r}{w_2r} \leq \frac{\chi(E)}{r}$, 
which implies 
$$ \chi(E_2)  \leq \chi(E)w_2 + r.$$
Again, by replacing $\chi(E_2)= \chi(E) - \chi(E_1) +r$ we obtain  $\chi(E_1) \geq \chi(E) w_1$. 
\end{proof}

Given $u = (([E_1],[E_2]), [\sigma])$ and $E_u$  defined by $u$, we wonder if there exists a polarization $w$ such that the above conditions \ref{necessary condition} hold. 
The answer depends only on numerical assumptions on
$(\chi(E_1), \chi(E_2))$ and $\Rk \sigma$ as it is shown in the following lemma.

\begin{lemma} 
\label{existencew}
Let $r \geq 2$ and $ 1 \leq k  \leq r$ be integers. 
There exists a non empty subset ${\mathcal W}_{r,k} \subset {\mathbb Z}^2$ such that for any pair $(\chi_1,\chi_2) \in {\mathcal W}_{r,k}$  we can find a polarization $w$ satisfying the following conditions:
\begin{equation}
\label{numconditions}
\begin{cases}\chi w_1  \leq \chi_1 \leq \chi w_1 + k, \\
\chi w_2 + r-k \leq \chi_2 \leq \chi w_2 + r,
\end{cases}
\end{equation}
where $\chi = \chi_1 + \chi_2 -r$.
\end{lemma}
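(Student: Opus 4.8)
The plan is to eliminate the polarization from the two blocks of inequalities in \eqref{numconditions} and reduce the entire system to a single inequality in one variable, after which exhibiting a nonempty $\mathcal{W}_{r,k}$ becomes immediate. The whole content of the lemma lies in this reduction.

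First I would substitute $w_2 = 1 - w_1$ throughout, so that $\chi w_2 = \chi - \chi w_1$, and use the defining relation $\chi = \chi_1 + \chi_2 - r$, which gives $\chi - \chi_2 + r = \chi_1$. With these substitutions the second block $\chi w_2 + r - k \leq \chi_2 \leq \chi w_2 + r$ rewrites as
$$\chi_1 - k \leq \chi w_1 \leq \chi_1.$$
This is \emph{identical} to the condition extracted from the first block $\chi w_1 \leq \chi_1 \leq \chi w_1 + k$. Hence the full system \eqref{numconditions} is equivalent to the single requirement that there exist $w_1 \in (0,1)$ with $\chi_1 - k \leq \chi w_1 \leq \chi_1$. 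I expect the recognition that the two blocks are secretly the same constraint on $\chi w_1$ to be the only real step; everything afterwards is bookkeeping.

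Second, with $\chi = \chi_1 + \chi_2 - r$ determined by the pair, the quantity $\chi w_1$ sweeps the open interval $(0,\chi)$ as $w_1$ runs over $(0,1)$ (when $\chi>0$; symmetric considerations apply for $\chi<0$, while $\chi=0$ forces $\chi w_1=0$). So solvability becomes the purely numerical statement that the interval $[\chi_1-k,\chi_1]$, of length $k\geq 1$, meets the relevant open interval. To produce a nonempty $\mathcal{W}_{r,k}$ it then suffices to restrict to the convenient region where $\chi>0$ and $0<\chi_1<\chi$; concretely I would set
$$\mathcal{W}_{r,k} = \{(\chi_1,\chi_2)\in \Z^2 : \chi_1 \geq 1,\ \chi_2 \geq r+1\},$$
which is visibly nonempty (it contains $(1,r+1)$). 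For such a pair one has $\chi = \chi_1 + \chi_2 - r \geq 2 > 0$ and $0 < \chi_1 < \chi$, so $w_1 := \chi_1/\chi \in (0,1)$ yields $\chi w_1 = \chi_1 \in [\chi_1 - k,\chi_1]$ because $k\geq 1$; taking $w_2 = 1-w_1$ gives a polarization satisfying \eqref{numconditions}.

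Finally I would note that, if a sharper statement is wanted, the full solution set can be described by splitting on the sign of $\chi$ and tracking which endpoints are attainable (the openness of $w_1\in(0,1)$ turning some $\leq$ into strict inequalities); but since the lemma asks only for a nonempty subset, the explicit choice above is enough. The main obstacle is conceptual rather than computational: once one sees that both pairs of inequalities collapse to the same condition $\chi_1-k\leq \chi w_1\leq \chi_1$, the existence of a suitable $w$ is just the problem of placing one point inside an interval of length $k\geq 1$.
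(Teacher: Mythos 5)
Your proof is correct. The key reduction --- substituting $w_2 = 1 - w_1$ and observing that both blocks of \eqref{numconditions} collapse to the single condition $\chi_1 - k \leq \chi w_1 \leq \chi_1$ --- is also the computational heart of the paper's proof, which likewise eliminates $w_2$ and solves the resulting interval condition for $w_1$. Where you diverge is the endgame: the paper performs a case analysis on the sign of $\chi$ and determines the \emph{exact} solvability locus (for $\chi > 0$: $\chi_1 > 0$ and $\chi_2 > r-k$; for $\chi < 0$: $\chi_1 < k$ and $\chi_2 < r$; plus the degenerate case $\chi = 0$), implicitly taking $\mathcal{W}_{r,k}$ to be this maximal region, whereas you exhibit the explicit smaller set $\{(\chi_1,\chi_2) : \chi_1 \geq 1,\ \chi_2 \geq r+1\}$ together with the witness $w_1 = \chi_1/\chi$, which is rational and lies strictly in $(0,1)$ as required; note that your witness gives $\chi w_1 = \chi_1$, which satisfies both blocks for \emph{every} $k$ simultaneously, so your set even works uniformly in $k$, which is precisely the content of Remark \ref{allk}. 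Your version proves the lemma as literally stated with less work; what the paper's version buys is the identification of $\mathcal{W}_{r,k}$ with the full solvable region, and this is quietly used downstream: Remark \ref{allk} identifies $\bigcap_{k=1}^{r}\mathcal{W}_{r,k}$ with $\mathcal{W}_{r,1}$ via the explicit inequalities, and the hypotheses of Theorems \ref{THM:MAIN} and \ref{THM:2} quantify over these specific sets, so adopting your $\mathcal{W}_{r,k}$ as the definition would shrink the range of Euler characteristics covered by the main theorems (in particular it misses the entire region $\chi \leq 0$). As a proof of the stated existence lemma, however, nothing is missing.
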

\begin{proof}
First of all note that if $\chi = 0$, i.e.  
$\chi_1+\chi_2= r$ and we assume that $ 0 \leq \chi_1 \leq r$, then any polarization $w$ satisfies conditions \ref{numconditions}. 
\hfill \par
We distinguish two cases according to the sign of $\chi$. Assume that $\chi > 0$. 
Then there exists a polarization $w$ satisfying  conditions \ref{numconditions}, 
if and only if  the following system has solutions:  
$$  \begin{cases} \frac{\chi_1 - k}{\chi} \leq w_1 \leq \frac{\chi_1}{\chi} 
\\ 
\frac{\chi_2 - r}{\chi} \leq w_2 \leq \frac{\chi_2 + k -r }{\chi} 
\\
w_1 + w_2 = 1
\\ 0 < w_i < 1, w_i \in {\mathbb Q}
\end{cases}
$$
This occurs  if  and only if $\chi_1 > 0$ and $\chi_2 > r-k$.
Likewise,  if $\chi <0$,
then we have the following system:
$$  \begin{cases} \frac{\chi_1 }{\chi} \leq w_1 \leq \frac{\chi_1 - k}{\chi} 
\\ 
\frac{\chi_2 - r +k}{\chi} \leq w_2 \leq \frac{\chi_2  -r }{\chi} 
\\
w_1 + w_2 = 1
\\ 0 < w_i < 1, w_i \in {\mathbb Q}
\end{cases}
$$
which  has solutions if and only if $\chi_1 <k$ and $\chi_2 <r$.
\end{proof}

\begin{center}
\includegraphics[width=0.7\textwidth]{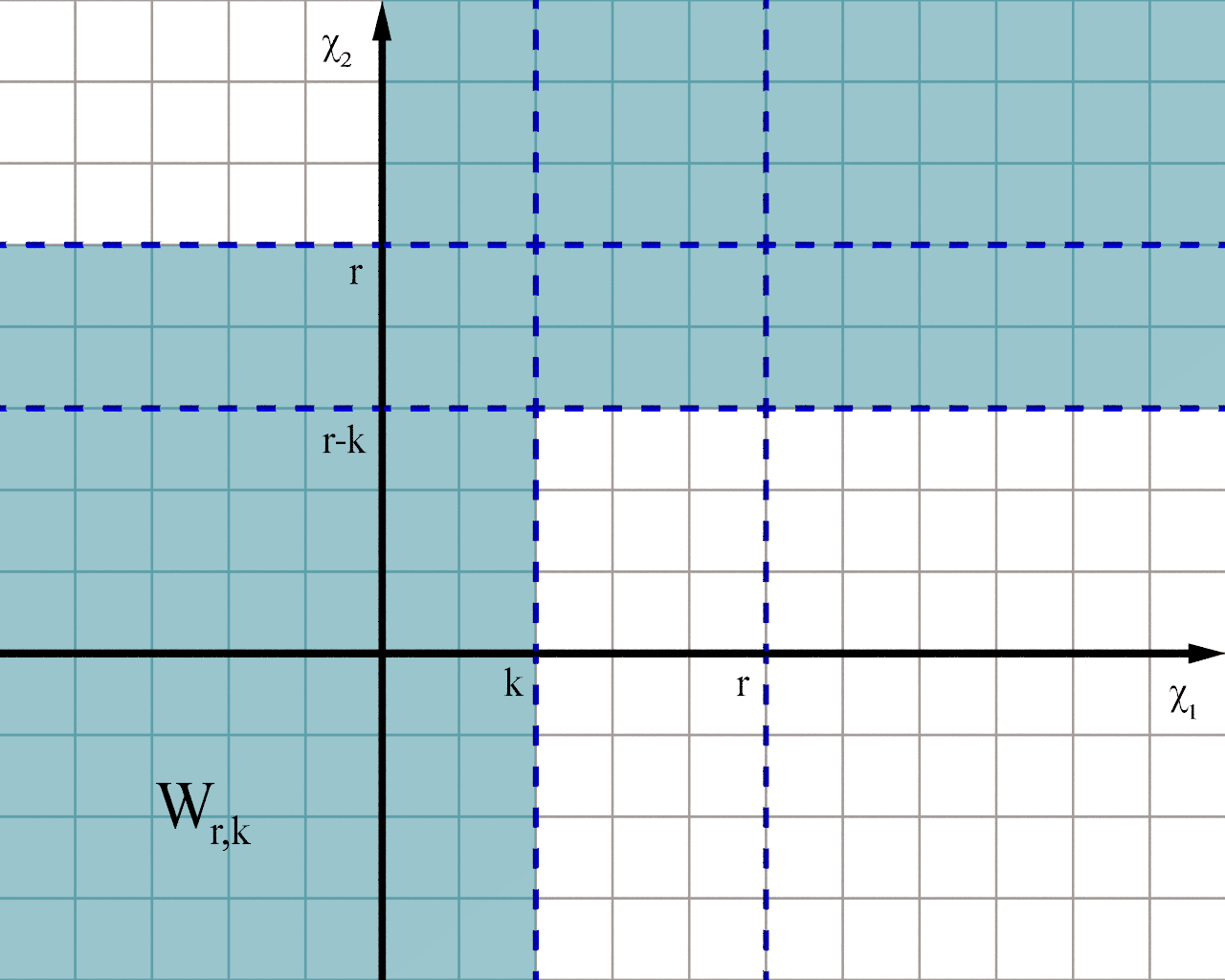}
\end{center}

\begin{remark}
\label{allk}
Let ${\mathcal W}_r= \bigcap_{k = 1}^{r} {\mathcal W}_{r,k}$.  Note that it is a  non empty subset and it is actually ${\mathcal W}_{r,1}$. Moreover,
if  $(\chi_1,\chi_2) \in {\mathcal W}_r$, then by the proof of lemma \ref{existencew} it follows  that we can find a polarization $w$ which satisfies the conditions \ref{numconditions} for all $k= 1, \dots,r.$ 
\end{remark}

Assume that $\Rk \sigma = r$, i.e. $E$ is a vector bundle, then  the necessary conditions of Lemma \ref{necessary condition} are the same  of   Theorem
\ref{modulispace}. Hence, by the above Theorem, 
they are also sufficient to give $w$-semistability of $E$. So we obtain the following: 

\begin{corollary}
\label{COR:NECISSUF}
Let $E= E_u$ be the sheaf defined by $u = (([E_1],[E_2]), [\sigma]) \in {\mathbb P}({\mathcal F})$. Assume that $\Rk \sigma = r$ and  $(\chi(E_1), \chi(E_2)) \in {\mathcal W}_{r,r}$, then there exists  a polarization $w$ such that   $E$ is $w$-semistable. 
In particular, since $E_i$ are stable, then $E$ is $w$-stable too. 
\end{corollary}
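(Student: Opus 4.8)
The plan is to chain together the results already in place, since the content of this corollary is precisely the observation that the \emph{necessary} conditions of Lemma~\ref{necessary condition} become \emph{sufficient} once $\Rk\sigma = r$. First I would note that, because $\sigma$ is an isomorphism, Proposition~\ref{construction} guarantees that $E = E_u$ is an honest vector bundle on $C$ with ${E}_{\vert C_i} = E_i$ and $\chi(E) = \chi(E_1) + \chi(E_2) - r$. This places us exactly in the situation to which Theorem~\ref{modulispace} applies.

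Next I would apply Lemma~\ref{existencew} in the case $k = r$ to the pair $(\chi(E_1), \chi(E_2))$, which lies in $\mathcal{W}_{r,r}$ by hypothesis. This produces a polarization $w = (w_1,w_2)$ satisfying conditions~\ref{numconditions}. Writing $\chi_i = \chi(E_i)$ and $\chi = \chi(E)$ and specializing to $k = r$, the term $r - k$ vanishes, so these conditions read
\[
\chi\, w_i \;\leq\; \chi(E_i) \;\leq\; \chi\, w_i + r, \qquad i = 1,2.
\]

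The key step is then to recognize that these are exactly the inequalities~\ref{eq:irreduciblecomponent} of Theorem~\ref{modulispace}, now read for the vector bundle $E$ itself. Since the restrictions $E_1$ and $E_2$ are stable, in particular semistable, part~(2) of Theorem~\ref{modulispace} applies directly and yields the $w$-semistability of $E$; moreover, because at least one—indeed both—of the restrictions is stable, the same statement upgrades this conclusion to $w$-stability, which is what we want.

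I do not expect a genuine obstacle here, as the entire difficulty has already been absorbed into Lemma~\ref{existencew} (the combinatorial existence of a suitable $w$) and into Theorem~\ref{modulispace}(2) (the sufficiency of the numerical conditions for semistability of a vector bundle with semistable restrictions). The only point requiring care is the bookkeeping that the $k = r$ specialization of~\ref{numconditions} coincides verbatim with~\ref{eq:irreduciblecomponent}, so that the necessary conditions coming from Lemma~\ref{necessary condition} close up into sufficient ones.
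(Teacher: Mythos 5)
Your proposal is correct and follows essentially the same route as the paper, whose proof is exactly the observation (made in the paragraph preceding the corollary) that for $\Rk\sigma = r$ the necessary conditions of Lemma~\ref{necessary condition} specialize to the conditions \eqref{eq:irreduciblecomponent} of Theorem~\ref{modulispace}, so that Lemma~\ref{existencew} supplies the polarization and Theorem~\ref{modulispace}(2) upgrades the numerical conditions to $w$-semistability, with stability of the restrictions giving $w$-stability. Your write-up just makes explicit the bookkeeping (via Proposition~\ref{construction} and the $k=r$ specialization of \eqref{numconditions}) that the paper leaves implicit.
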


Unfortunately, when $E_u$ fails to be  a vector bundle, the  necessary conditions of Lemma 
\ref{necessary condition} are not enough to  ensure $w$-semistability, see \cite{T2} for an example. Nevertheless,  we are able to produce an open subset 
of  $\mathcal{U}_{C_1}(r,d_1)\times \mathcal{U}_{C_1}(r,d_1)$ such that for every $u$ over this open, the sheaf $E_u$ is $w-$semistable.

\hfill\par
We recall the following definition, see \cite{NR}.
\begin{definition} Let $G$ be a vector bundle  on a smooth curve. For any  integer $k$ we set:
$$\mu_k(G)= \frac{ deg (G) +k}{rk(G)}.$$
 A vector bundle $G$ is said  $(m,k)$-semistable (resp. stable) if for any subsheaf $F$ we have:
$$ \mu_{m}(F) \leq \mu_{m-k}(G)  \quad (resp. <).$$
\end{definition}

\begin{proposition}
\label{PROP:STABILITY}
Let $E = E_u$  be the sheaf defined by $u = (([E_1],[E_2]), [\sigma]) \in {\mathbb P}({\mathcal F})$. Assume that   $\Rk \sigma = k \leq r-1$. 
If $(\chi(E_1), \chi(E_2)) \in {\mathcal W}_{r,k}$,   $E_1$ is $(0,k)$-semistable and $E_2$ is  $(0,r)$-semistable, then there exists a polarization $w$ such that $E$ is $w$-semistable. Moreover, if $E_1$ is $(0,k)$-stable or $E_2$ is $(0,r)$- stable then $E$ is $w$-stable too. 
\end{proposition}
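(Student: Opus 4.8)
The plan is to verify the defining inequality $\mu_w(F)\le \mu_w(E)$ directly for every subsheaf $F\subseteq E$, after fixing a convenient polarization. Since $(\chi(E_1),\chi(E_2))=(\chi_1,\chi_2)\in\mathcal{W}_{r,k}$, Lemma \ref{existencew} furnishes a polarization $w$ for which the numerical conditions \ref{numconditions} hold. Writing $\chi=\chi(E)=\chi_1+\chi_2-r$ and setting $\alpha=\chi_1-\chi w_1$, $\beta=\chi_2-\chi w_2$, these conditions read $0\le \alpha\le k$, $r-k\le\beta\le r$, and one checks $\alpha+\beta=r$. Because $E$ has multirank $(r,r)$ one computes $\mu_w(E)=\chi/r$, so the target inequality for a subsheaf $F$ of multirank $(s_1,s_2)$ becomes $r\,\chi(F)\le \chi\,(w_1s_1+w_2s_2)=(\chi_1-\alpha)s_1+(\chi_2-\beta)s_2$.

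First I would set up the decomposition of $F$ along the node. Using the inclusion $E\hookrightarrow {j_1}_*E_1\oplus{j_2}_*E_2$ and the two projections, I attach to $F$ its images $F_i\subseteq E_i$, which are rank-$s_i$ subsheaves, and record the fundamental identity $\chi(F)=\chi(F_1)+\chi(F_2)-\ell$, where $\ell\ge 0$ is the length of the gluing cokernel of $F\hookrightarrow {j_1}_*F_1\oplus{j_2}_*F_2$, a skyscraper at $p$; one always has $\ell\le\min(k,s_1,s_2)$. A point I would stress is that the projection of $E$ to ${j_2}_*E_2$ has image ${j_2}_*E_2'$, where $E_2'=\{\,s\in E_2 : s(q_2)\in \Img\sigma\,\}\subsetneq E_2$; hence $F_2\subseteq E_2'$ and in particular $F_2$ is never all of $E_2$.

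The heart of the proof is the estimate of $\chi(F)$. When $0<s_i<r$ the hypotheses apply to $F_i$: $(0,k)$-semistability of $E_1$ gives $\chi(F_1)\le s_1(\chi_1-k)/r$ and $(0,r)$-semistability of $E_2$ gives $\chi(F_2)\le s_2(\chi_2-r)/r$. Substituting into the identity above and using $\ell\ge 0$, $\alpha\le k$, $\beta\le r$, the target inequality reduces to the elementary $\alpha s_1+\beta s_2\le k s_1+r s_2+r\ell$, which holds term by term. The genuinely delicate cases are the full-rank ones $s_1=r$ or $s_2=r$, where the NR-bound is unavailable (for instance $F_1$ may equal $E_1$); here I would replace the missing degree bound by a lower bound on the gluing length coming from $\sigma$: e.g. if $F_1=E_1$ then every glued section is seen through $\sigma$, so $A_1=F\cap {j_1}_*E_1\subseteq K_1=\ker(\sigma\circ\rho_1)$, which forces $\ell=k$, while $F_2\subseteq E_2'$ supplies the complementary degree drop on the $C_2$-side. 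In each regime the same target inequality closes after a short computation; the case $(s_1,s_2)=(r,r)$ is immediate, since then $\mu_w(F)=\chi(F)/r$ and any proper such $F$ has $\chi(F)<\chi$. This gives $w$-semistability. I expect this node bookkeeping, and in particular pinning down the lower bounds on $\ell$ in the full-rank cases, to be the main obstacle.

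For the stability statement I would analyse the equality case. Equality $\mu_w(F)=\mu_w(E)$ forces simultaneous equality in the NR-bounds used and saturates the elementary inequality above; in the generic-rank stratum this yields $(k-\alpha)s_1+\alpha s_2=0$, so a non-trivial $F$ attaining equality must be supported on a single component ($s_1=0$ or $s_2=0$) and realise equality in the NR-bound of that component. If $E_1$ is $(0,k)$-stable this is impossible for $0<s_1<r$, and symmetrically if $E_2$ is $(0,r)$-stable for $0<s_2<r$; the full-multirank configurations are ruled out by the $(r,r)$ observation, and the remaining single-component full-rank sheaves (the analogues of $K_1,K_2$) attain equality only at the extreme values $\alpha=k$ or $\alpha=0$, which are excluded once $w$ is taken in the interior of the range provided by Lemma \ref{existencew}. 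Hence, under the additional hypothesis, every proper $F$ satisfies $\mu_w(F)<\mu_w(E)$, so $E$ is $w$-stable.
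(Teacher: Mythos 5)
Your proposal is correct in substance and follows the same overall strategy as the paper --- fix $w$ via Lemma \ref{existencew}, test an arbitrary subsheaf $F\subseteq E$ against Narasimhan--Ramanan-type bounds on its componentwise data, and absorb the node contribution using the window $0\le\alpha\le k$, $r-k\le\beta\le r$ --- but with dual bookkeeping. The paper decomposes $F$ through kernels: it produces $0\to G_1\oplus G_2\to F\to \mathbb{C}_p^s\to 0$ with $G_1\subseteq K_1$, $G_2\subseteq K_2$, where $s\le\min(k,s_1,s_2)$ is the rank of the free part of the stalk $F_p$, bounds $\chi(G_1)$ by $(0,k)$-semistability of $E_1$ and $\chi(G_2)$ by $(0,r)$-semistability of $E_2(-q_2)$, and concludes in a single uniform computation from $\mu_w(K_i)\le\mu_w(E)$ (Lemma \ref{necessary condition}) plus the cushion $s-s_2\le 0$. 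You instead work with the images $F_i$ and the gluing defect $\ell$; note that your $\ell$ is exactly the paper's $s$ (both compute the length of $F_1/A_1$ with $A_1=\ker(F\to {j_2}_*F_2)$, which depends only on the stalk $\OO_p^s\oplus\OO_{q_1}^a\oplus\OO_{q_2}^b$), which is why your bound $\ell\le\min(k,s_1,s_2)$ is valid. What your stratified route buys is real: the paper's uniform display silently applies the $(0,r)$-bound to $G_2$ also when $s_2=r$, where it fails ($G_2$ may be $E_2(-q_2)$ itself; plugging $F=E$ into the paper's chain would give $\mu_w(E)\le\mu_w(E)-(r-k)/r$), and your explicit full-rank strata are precisely where this must be replaced by the trivial full-rank estimates $\chi(A_1)\le\chi(K_1)=\chi(E_1)-k$, resp.\ $\chi(A_2)\le\chi(K_2)=\chi(E_2)-r$; with these, the target inequality does close in every stratum, as you assert. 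Two small repairs to your sketch: the inclusion $A_1\subseteq K_1$ holds for \emph{every} subsheaf $F$ and does not by itself force $\ell=k$ --- what forces $\ell=k$ is the surjectivity of $F\to{j_1}_*E_1$ when $F_1=E_1$ --- and for a full-rank $F_1\subsetneq E_1$ you should argue via $\chi(F_1)-\ell=\chi(A_1)\le\chi(E_1)-k$ rather than via a fixed value of $\ell$. Finally, your remark that the stability clause requires $w$ in the interior of the window (at a boundary value $\alpha\in\{0,k\}$ the subsheaf $K_1$ or $K_2$ achieves $\mu_w$-equality regardless of any $(0,k)$- or $(0,r)$-stability) is a genuine point that the paper's proof leaves implicit.
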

\begin{proof}
Since  $(\chi(E_1), \chi(E_2)) \in {\mathcal W}_{r,k}$, by Lemma \ref{existencew} there exists a polarization $w$ such that the necessary conditions \ref{necessary condition} hold. 
We claim that if $E_1$ is $(0,k)$-semistable and $E_2$ is $(0,r)$-semistable, then $E$ is $w$-semistable.
\hfill\par\noindent
Let $F \subset E$ be a subsheaf, it is a sheaf of depth one too. Assume that $F$ has multirank $(s_1, s_2)$ and that  at the node $p$ the stalk  of $F$ is $\OO_p^s \oplus \OO_{q_1}^a \oplus \OO_{q_2}^b$, with $s \geq 0$, 
$s_1 = s+a \leq r$ and $s_2 = s + b \leq r$. 
 Since $\Rk \sigma = k$, by construction the free part of the stalk of $E$  at $p$ is  $\OO_p^k$.
This implies that  $0 \leq s \leq k$. 
\hfill\par
By construction, there exists two vector bundles $F_1 \subseteq E_1$ and $F_2 \subseteq E_2$ such that $F$ is the kernel of the  restriction of $\tilde{\sigma}$ to the subsheaf 
$ {j_1}_*(F_1) \oplus {j_2}_*(F_2)$:
$$ {\tilde{\sigma}}_{\vert {j_1}_*(F_1) \oplus {j_2}_*(F_2)} \colon {j_1}_*(F_1) \oplus {j_2}_*(F_2) \to
{j_p}_*{j_p}^*{j_2}_*(E_2).$$
By proceding as in the diagram \ref{EQ:DEFsigmatilde}, we deduce that $F$ fits into an exact sequence as follows:
$$ 0 \to G_1 \oplus G_2 \to F \to {\mathbb C}_p^s \to 0,$$
where $G_1$ is the kernel of $(\sigma \circ \rho_1)|_{F_1}$ and $G_2$ is the kernel of 
$\rho_2|_{F_2}$. Hence $G_i \subseteq K_i$. 
Note that if $s= 0$, then actually $F \simeq G_1 \oplus G_2$.
\hfill\par
For any $s$, we compute the $w$-slope of $F$: 
$$\mu_w(F) = \frac{\chi(F)}{w_1s_1 + w_2 s_2}=
\frac{\chi(G_1) + \chi(G_2) +s}{w_1s_1 + w_2 s_2}=$$
$$ = \frac{deg(G_1) + s_1(1-g_1) + deg(G_2) + s_2(1-g_2) + s}{w_1s_1 + w_2 s_2}.$$
Since  $E_1$ is $(0,k)$-semistable,  then we have: 
$$\frac{deg (G_1)}{s_1} \leq  \frac{d_1 -k}{r},$$
since $E_2$ is $(0,r)$-semistable, then $E_2(-q_2)$ is $(0,r)$-semistable too,  so we have:
$$\frac{deg (G_2)}{s_2} \leq  \frac{d_2 -2r}{r}.$$
By replacing we obtain:
\begin{multline}
\mu_w(F) \leq \frac{1}{w_1s_1 + w_2 s_2}\left[
s_1w_1\left(\frac{(d_1-k) +r(1-g_1)}{w_1r}\right) + s_2w_2\left(\frac{(d_2-r) + r(1-g_2)}{w_2r}\right) 
+s-s_2\right]=\\= \frac{s_1 w_1}{w_1s_1 + w_2 s_2} \mu_w(K_1) +
\frac{s_2 w_2}{w_1s_1 + w_2 s_2} \mu_w(K_2) +
\frac{s-s_2}{w_1s_1 + w_2s_2}.
\end{multline}
By Lemma \ref{necessary condition}, we have that $\mu_w(K_i) \leq \mu_w(E)$, so we obtain:
$$\mu_w(F) \leq  \mu_w(E)+ \frac{s -s_2}{w_1s_1 + w_2s_2}.$$
Since $s-s_2 \leq0$,  we have that $\mu_w(F) \leq  \mu_w(E)$. 
\hfill\par
Finally, if $E_1$ is $(0,k)$-stable or $E_2$ is $(0,r)$-stable, then the above inequality is strict. This concludes the proof. 
\end{proof}

Note that, by definition, if $E_i$ is $(0,r)$-stable, then it is also $(0,k)$-stable for all $k\leq r$. 

\begin{lemma}
\label{LEM:OPENSTABLE}
Let ${\mathcal U}_{C_i}(r,d_i)$ be the moduli space of semistable vector bundles of rank $r$ and degree $d_i$ on a 
smooth curve $C_i$ of genus $g_i$. 
If $d_i$ and $r$ are coprime and   $g_i > r +1$, then the locus of vector bundles of ${\mathcal U}_{C_i}(r,d_i)$ which are $(0,r)$-stable is a non empty open subset of ${\mathcal U}_{C_i}(r,d_i)$.
\end{lemma}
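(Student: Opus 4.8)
The plan is to reformulate $(0,r)$-stability explicitly, then establish openness by a semicontinuity argument and non-emptiness by a dimension count in which the hypothesis $g_i>r+1$ enters sharply. Throughout I fix $i$ and drop the subscript, writing $C,g,d$ for $C_i,g_i,d_i$. Unwinding the definition with $m=0$ and $k=r$, a rank-$r$ degree-$d$ bundle $E$ is $(0,r)$-stable precisely when every proper subbundle $F$ of rank $s$ satisfies $\deg F/s<(d-r)/r$, i.e. $\deg F< s(d-r)/r$. Since $\gcd(r,d)=1$ and $1\le s\le r-1$, the number $s(d-r)/r=sd/r-s$ is never an integer, so this is the clean condition $\deg F\le\lfloor s(d-r)/r\rfloor$ for every $s$; in particular $(0,r)$-stability is strictly stronger than stability.

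First I would prove openness. For each $s\in\{1,\dots,r-1\}$ consider the function $[E]\mapsto d_s(E):=\max\{\deg F: F\subset E\text{ a subbundle of rank }s\}$. This is a well-defined, upper semicontinuous function on $\mathcal U_C(r,d)$: in a flat family the maximal degree of a rank-$s$ subbundle can only jump up under specialization, which one checks using the universal (Poincar\'e) bundle $\mathcal P$ on $\mathcal U_C(r,d)\times C$ — available since $\gcd(r,d)=1$ — together with properness of the relative Quot scheme of rank-$(r-s)$ locally free quotients of $\mathcal P$. Consequently each set $\{[E]:d_s(E)<s(d-r)/r\}$ is open, and the $(0,r)$-stable locus, being the finite intersection $\bigcap_{s=1}^{r-1}\{d_s(E)<s(d-r)/r\}$, is open.

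For non-emptiness I would show the complementary locus has dimension strictly smaller than $\dim\mathcal U_C(r,d)=r^2(g-1)+1$ (recall $g>r+1$, so in particular $g\ge 2$). Suppose $E$ is stable but not $(0,r)$-stable. Choosing a $(0,r)$-destabilizing subbundle, saturating it, and passing to its maximal Harder--Narasimhan piece produces a semistable, saturated subbundle $F\subset E$ with $(d/r)-1\le\mu(F)\le\mu(E)=d/r$; writing $\rank F=s$ and $\deg F=e$, the quotient $Q=E/F$ is then a bundle of rank $r-s$ and degree $d-e$, and $e$ lies in the finite range $s(d-r)/r\le e< sd/r$. For fixed $(s,e)$ let $Z_{s,e}\subset\mathcal U_C(r,d)$ be the locus of such $E$. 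Since $F$ is semistable with $\mu(F)<\mu(E)$ while every quotient of $Q$, being a quotient of the semistable bundle $E$, has slope $\ge\mu(E)$, there is no nonzero map $Q\to F$; hence $\Hom(Q,F)=0$ and $\dim\Ext^1(Q,F)=-\chi(Q^\vee\otimes F)=sd-re+s(r-s)(g-1)$, a value independent of the particular $E$. As $E$ is recovered from the data of $F$, $Q$ and a class in $\PP(\Ext^1(Q,F))$, one obtains the bound
$$\dim Z_{s,e}\le \big(s^2(g-1)+1\big)+\big((r-s)^2(g-1)+1\big)+\dim\Ext^1(Q,F)-1.$$
A direct computation then gives $\dim\mathcal U_C(r,d)-\dim Z_{s,e}\ge (g-1)\,s(r-s)+(re-sd)\ge s\big((g-1)(r-s)-r\big)$, using $re-sd\ge -rs$. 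The right-hand side is positive for all $1\le s\le r-1$ exactly when $(g-1)(r-s)>r$; the binding case $s=r-1$ reads $g-1>r$, which is the hypothesis $g>r+1$. Thus each $Z_{s,e}$, and hence their finite union, is a proper closed subset, so the open $(0,r)$-stable locus is non-empty.

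The main obstacle is making the dimension estimate of $Z_{s,e}$ fully rigorous: one must realize these loci as images of honest parameter spaces (Quot schemes over $\mathcal U_C(r,d)$, or spaces of extensions built from bounded families of $F$ and $Q$) and bound their dimensions, which requires the boundedness of the families of destabilizing subbundles and their quotients. The reduction to a \emph{semistable} subbundle $F$ via the Harder--Narasimhan filtration is what keeps $F$ inside a moduli space of dimension $s^2(g-1)+1$; the quotient $Q$ need not be semistable, but since it is a quotient of the semistable $E$ the vanishing $\Hom(Q,F)=0$ still holds, so the value of $\dim\Ext^1(Q,F)$ — the only place where $Q$ enters the count — is unaffected. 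Openness, by contrast, is routine once the semicontinuity of $d_s$ is in hand.
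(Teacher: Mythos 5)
Your overall strategy for non-emptiness coincides with the paper's: stratify the locus of stable but non-$(0,r)$-stable bundles by the rank $s$ and degree $e$ of a destabilizing subbundle $F$ (with $e$ confined to the finite window $s(d-r)/r\le e\le sd/r$), bound each stratum by $\bigl(s^2(g-1)+1\bigr)+\bigl((r-s)^2(g-1)+1\bigr)+\dim\Ext^1(Q,F)-1$ using $\Hom(Q,F)=0$, and conclude that the codimension is at least $s\bigl[(g-1)(r-s)-r\bigr]>0$ precisely when $g>r+1$; your formulas, the bound $re-sd\ge -rs$, and the final inequality match the paper's verbatim. One genuine addition on your side: you prove openness explicitly, via upper semicontinuity of the maximal degree $d_s$ of a rank-$s$ subbundle (properness of the relative Quot scheme over the fine moduli space, which exists since $\gcd(r,d)=1$). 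The paper's proof only establishes non-emptiness and leaves openness implicit (it is standard from \cite{NR}, where $(m,k)$-stability is introduced), so this part of your write-up is correct and welcome.

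The gap is in how you normalize the destabilizing pair. Saturating and passing to the first Harder--Narasimhan piece does make $F$ semistable and saturated (so $Q=E/F$ is a bundle), and your slope argument for $\Hom(Q,F)=0$ is valid even for unstable $Q$. But $Q$ can indeed be unstable, and then your bound on $\dim Z_{s,e}$ is unjustified: the term $(r-s)^2(g-1)+1$ is the dimension of the moduli space of \emph{semistable} bundles and only bounds the number of moduli of the $Q$-family when $Q$ is semistable. Your remark that $Q$ ``only enters the count through $\Ext^1(Q,F)$'' is not right --- $Q$ also enters through the dimension of the family of possible $Q$'s, which is exactly the term in question; and boundedness alone does not fix this, since a bounded parametrizing scheme overshoots the number of moduli by the dimension of the automorphism/gauge groups, and $\dim\Hom(Q,Q)\ge 2$ for unstable $Q$. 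The paper closes precisely this hole with a deformation argument quoted from the proof of Proposition 1.4 of \cite{TR}: if the stratum $Y_{a,s}$ is non-empty, then for a general $E$ in it both $F$ and $Q$ may be taken stable, which legitimizes using both moduli-space dimensions (and gives $\Hom(Q,F)=0$ from stability of $E$). Alternatively, your version can be repaired by stratifying $Q$ by its Harder--Narasimhan type, or by running the count on the stack of bundles, of dimension $(r-s)^2(g-1)$; either way an additional argument is needed at exactly the point you flagged as ``the main obstacle.''
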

\begin{proof}
Let us consider the locus
$$Y = \{ [E] \in {\mathcal U}_{C_i}(r,d_i)\ \vert \  \text{$E$ is not}\  (0,r)-\mbox{stable} \}.$$
We can consider the subset  $Y_{a,s}$ of $Y$ given by all stable  vector bundles  $E$ which can be written as
$$0 \to F \to E \to Q \to 0,$$
where $F$ is a subbundle of $E$ with $\deg(F)= a$ and $\Rk(F)=s \leq r-1$ and  $$\mu(E)-1 =\mu_{-r}(E)\leq \mu(F) \leq \mu_0(E)=\mu(E).$$
By a deformation argument (see the proof of Proposition 1.4 of \cite{TR}), one can prove that  if $Y_{a,s} \not= \emptyset$ then  for a general $E$ in $Y_{a,s}$ both $F$ and $Q$ are stable. Moreover,  since $E$ is stable, we have $\Hom(Q,F)=0$. Hence we can write
$$ \dim Y_{a,s} \leq \dim {\mathcal U}_{C_i}(s,a) + 
\dim {\mathcal U}_{C_i}(r-s,d_i-a) + 
\dim H^1(C_i, {\mathcal Hom}(Q,F)) -1 = $$
$$ = (g_i-1)(r^2 -rs + s^2) +1 + (d_is- ar).$$
Hence: 
$$\dim {\mathcal U}_{C_i}(r,d_i) - \dim Y_{a,s} \geq (g_i-1)(rs-s^2)- (d_is-ar).$$
Since $E \in Y$, $\mu_0(F) \geq \mu_{-r}(E)$, i.e.
$$\frac{a}{s} \geq \frac{d_i -r}{r},$$
which implies 
$$ d_is-ar \leq rs.$$
Finally, if $g_i > 1+r$, for all $s\leq r-1$, we have:
$$\dim {\mathcal U}_{C_i}(r,d_i) - \dim Y_{a,s} \geq
s[(g_i-1)(r-s)-r] > 0$$
which concludes the proof. 
\end{proof}

\section{Main results}
\label{sec4}
In this section we prove our main results. We assume that the hypothesis of Lemma \ref{LEM:projbundle} are satisfied.
\hfill\par

Let ${\mathbb P}({\mathcal F})$ be the projective bundle on
${\mathcal U}_{C_1}(r,d_1) \times {\mathcal U}_{C_2}(r,d_2)$. 
For each $ 1 \leq k\leq r-1$, let $\mathcal{B}_k$ be the subset of $\PP(\mathcal{F})$ such that 
$$\mathcal{B}_k\cap \pi^{-1}([E_1],[E_2])=\{[\sigma]\in \PP(\Hom(E_{1,q_1},E_{2,q_2}))\, |\, \Rk(\sigma)\leq k\}.$$
It is a proper closed subvariety of $\PP(\mathcal{F})$. 

\begin{definition}
\label{openU}
We will denote by $\mathscr{U}$ the open subset given by the complement of  $\mathcal{B}_{r-1}$ in $\PP(\mathcal{F})$. 
\end{definition}

\begin{remark}
\label{REM:dim}
Note that $\dim \mathscr{U} = \dim \PP(\mathcal{F}) = r^2(g_1+g_2-1)+1$.
If we denote by $\pi_{\mathscr{U}}$ the restriction of $\pi$ to $\mathscr{U}$ we have that, by construction,
$$ \pi_{\mathscr{U}}:\mathscr{U}\rightarrow \mathcal{U}_{C_1}(r,d_1)\times \mathcal{U}_{C_2}(r,d_2)$$
is a fiber bundle whose fibers are isomorphic to $\PGL(r)$. More precisely
$$\pi_{\mathscr{U}}^{-1}([E_1],[E_2])=\PP(\GL(E_{1,q_1},E_{2,q_2})).$$
\end{remark}

For  $\chi= d_1 + d_2 + r(1 - g_1 -g_2)$,   let  $\mathcal{U}_C(w,r,\chi)_{d_1,d_2}$ be the irreducible component  of the moduli space of 
depth one sheaves on $C$ of rank $r$ and characteristic $\chi$ corresponding to the multidegree $(d_1,d_2)$, see Section \ref{sec2}.  
Let ${\mathcal V}_C(w,r,\chi)_{d_1,d_2} \subset {\mathcal U}_C(w,r,\chi)_{d_1,d_2}$ be the subset parametrizing classes of vector bundles.

\begin{theorem}
\label{THM:MAIN}
Let $C$ be  a nodal curve with a single node  $p$ and two smooth irreducible components $C_i$ of genus $g_i \geq 1$. Fix $r\geq 2$, for any $d_i \in {\mathbb Z}$  we set $\chi_i = d_i + r(1-g_i)$ and $\chi= d_1 + d_2 +r(1-g_1-g_2)$. Assume that $r$ is coprime with both $d_1$ and $d_2$ and  
$(\chi_1, \chi_2) \in {\mathcal W}_{r,r}$. Then 
there exists a polarization $w$ such that the map  
$$ 
\xymatrix{
\phi \colon {\mathbb P}({\mathcal F}) \ar@{-->}[r] & {\mathcal U}_C(w,r,\chi)_{d_1,d_2}
}
$$
sending $u \to [E_u]$ is birational. 
 In particular, the restriction ${\phi}\vert_{\mathscr{U}}$ is a an injective morphism and the image $\Phi(\mathscr{U})$ is contained in   ${\mathcal V}_C(w,r,\chi)_{d_1,d_2} $. 
\end{theorem}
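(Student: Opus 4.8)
The plan is to fix the polarization using the stability results already in hand, to realize $\phi|_{\mathscr{U}}$ as an injective morphism by relativizing the gluing construction, and then to force birationality by a dimension count against the irreducibility of the target component.

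First I would choose $w$. Since $(\chi_1,\chi_2)=(\chi(E_1),\chi(E_2))\in\mathcal{W}_{r,r}$, Lemma \ref{existencew} and Corollary \ref{COR:NECISSUF} furnish a polarization $w$ for which every $u\in\mathscr{U}$ (that is, every $u$ with $\Rk\sigma=r$) yields a sheaf $E_u$ that is a $w$-\emph{stable} vector bundle with $\chi(E_u)=\chi$, multidegree $(d_1,d_2)$ and $E_u|_{C_i}=E_i$ by Proposition \ref{construction}. In particular each $\phi(u)$, $u\in\mathscr{U}$, is a genuine point of $\mathcal{V}_C(w,r,\chi)_{d_1,d_2}$, and, $E_u$ being $w$-stable, its $S_w$-equivalence class is just its isomorphism class.

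Next I would prove that $\phi|_{\mathscr{U}}$ is a morphism, which I expect to be the main technical step. The idea is to carry out the construction of $E_u$ over all of $\mathscr{U}$ at once. Pulling back the Poincar\'e bundles $\mathcal{P}_i$ through $\pi_{\mathscr{U}}$ and the projections produces families $\mathcal{E}_1,\mathcal{E}_2$ on $\mathscr{U}\times C$, together with the tautological homomorphism coming from the universal section of $\pi^*\mathcal{F}\otimes\OO_{\PP(\mathcal{F})}(1)$, which over $\mathscr{U}$ has constant rank $r$ along $\mathscr{U}\times p$. Performing the kernel-of-$\tilde{\sigma}$ construction of diagram \ref{EQ:DEFsigmatilde} in this relative setting gives a coherent sheaf $\mathcal{E}$ on $\mathscr{U}\times C$ whose restriction to $\{u\}\times C$ is $E_u$; the constancy of $\Rk\sigma$ is what guarantees that $\mathcal{E}$ is flat over $\mathscr{U}$ with the expected Hilbert polynomial and that the relative kernel commutes with base change to each fibre. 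As every fibre is $w$-stable, the coarse moduli property of $\mathcal{U}_C(w,r,\chi)$ yields a classifying morphism $\mathscr{U}\to\mathcal{U}_C(w,r,\chi)$ which set-theoretically sends $u\mapsto[E_u]$; since $\mathscr{U}$ is connected and the multidegree is constantly $(d_1,d_2)$, this morphism lands in the single component $\mathcal{U}_C(w,r,\chi)_{d_1,d_2}$ and factors through $\mathcal{V}_C(w,r,\chi)_{d_1,d_2}$.

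Then I would establish injectivity. If $\phi(u)=\phi(u')$ with $u=(([E_1],[E_2]),[\sigma])$ and $u'=(([E_1'],[E_2']),[\sigma'])$, then $E_u\cong E_{u'}$; restricting to each component and using ${E_u}|_{C_i}=E_i$ gives $[E_i]=[E_i']$ in $\mathcal{U}_{C_i}(r,d_i)$. Any isomorphism $E_u\to E_{u'}$ restricts to automorphisms of the stable bundles $E_i$, hence to scalars $\lambda_i\in\C^*$ since $\Aut(E_i)=\C^*$; compatibility with the gluing data at the node then forces $\sigma'=(\lambda_2/\lambda_1)\,\sigma$, so $[\sigma]=[\sigma']$ and $u=u'$. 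Finally, by Remark \ref{REM:dim} one has $\dim\mathscr{U}=r^2(g_1+g_2-1)+1$, which by Theorem \ref{modulispace} equals $\dim\mathcal{U}_C(w,r,\chi)_{d_1,d_2}$; an injective morphism has image of the same dimension as its source, so $\phi(\mathscr{U})$ is a dense constructible subset of the irreducible target, whence $\phi|_{\mathscr{U}}$ is dominant, and, being generically injective in characteristic zero, of degree one, i.e. birational onto $\mathcal{U}_C(w,r,\chi)_{d_1,d_2}$. As $\mathscr{U}$ is open and dense in $\PP(\mathcal{F})$, the same conclusion holds for $\phi$. The genuinely delicate point here is the flatness of $\mathcal{E}$ over $\mathscr{U}$ and the verification that the relative kernel construction commutes with restriction to each fibre, so that the fibre is exactly $E_u$ and not a degeneration; everything else is routine bookkeeping with the moduli property, stability, and the dimension equality.
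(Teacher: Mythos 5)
Your proposal is correct and takes essentially the same route as the paper: the same choice of polarization via Lemma \ref{existencew} and Corollary \ref{COR:NECISSUF}, the same injectivity argument using $\Hom(E_i,E_i)\simeq \C\cdot \id_{E_i}$ and the compatibility of the gluing data at the node, and the same dimension count against the irreducible component to deduce dominance and hence birationality. The only (immaterial) difference is that you build the family $\mathcal{E}$ globally on $\mathscr{U}\times C$ via the tautological section of $\pi^*\mathcal{F}\otimes\OO_{\PP(\mathcal{F})}(1)$, whereas the paper works locally on opens $W\subseteq \mathscr{U}$ admitting a nonvanishing section of $\OO_{\PP(\mathcal{F})}(-1)$; the flatness and base-change point you flag as delicate is in fact automatic, since $\mathcal{E}$ is the kernel of a surjection of sheaves flat over $\mathscr{U}$, exactly as in the paper's two-step construction.
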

\begin{proof}
Let  $u = (([E_1],[E_2]),[\sigma]) \in \mathbb P({\mathcal F})$ and consider the sheaf $E = E_u$ defined by $u$, as in Section \ref{sec3}. 
Since  $(\chi_1,\chi_2) \in {\mathcal W}_{r,r}$, then, 
as a consequence of  Lemma \ref{existencew} and Corollary \ref{COR:NECISSUF} there exists a polarization $w$ such that $E_u$ is $w$-semistable for every $u \in \mathscr{U}$. This gives a point in  the moduli space $\mathcal{U}_C(w,r,\chi)_{d_1,d_2}$ and it shows that $\phi$ is well defined at least on $\mathscr{U}$.
\hfill\par

Now we will prove that ${\phi}_{\vert {\mathscr U}}$ is injective. Let $u=(([E_1],[E_2]),[\sigma])$ and $u'=(([E_1'],[E_2']),[\sigma'])$ in $\mathscr{U}$ with $\phi(u)=[E]$ and $\phi(u')=[E']$. Assume that $\phi(u)=\phi(u')$. Since $E$ and $E'$ are both $w$-stable and are in the the same $S_w$-equivalence class, then they have to be isomorphic (see Section \ref{sec2}). Let $\tau: E \to E'$ be an isomorphism. This induces an isomorphism $\tau_i:E_i\to E_i'$. So we can assume, that $E_i'=E_i$ and, thus $\sigma,\sigma':E_{1,q_1}\rightarrow E_{2,q_2}$ and $\tau_i:E_i\to E_i$ are isomorphism. As $E_p$ (respectively $E_p'$) is obtained
by glueing $E_{1,q_1}$ with $E_{2,q_2}$ along the isomorphism $\sigma$ (respectively along $\sigma'$), $\tau_i$ have to satisfy a compatibility
condition, summarized in the following commutative diagram:
$$
\xymatrix{
E_{1,q_1}\ar[r]^{\sigma}\ar[d]_{(\tau_1)_{q_1}} & E_{2,q_2}\ar[d]^{(\tau_2)_{q_2}}\\
E_{1,q_1}\ar[r]_{\sigma'} & E_{2,q_2}\\
}
$$

Since $E_i$ is stable we have $\Hom(E_i,E_i)\simeq \C\cdot \id_{E_i}$. Hence $(\tau_i)_{q_i}$ is the multiplication by some $\lambda_i\in\C^*$. In particular, $\sigma'$ is a non zero multiple of $\sigma$ and thus $[\sigma]=[\sigma']$.
\hfill\par
Now we prove that $\phi_{\vert {\mathscr U}}$ is a morphism. It is enough to 
 prove that $\phi$ is  regular at $u_0$, for any $u_0 \in \mathscr{U}$. 
 At this hand, we claim that there exists a non empty open subset 
 $W \subseteq  \mathscr{U} $  with $u_0 \in W$ and a vector bundle
${\mathcal E}$ on $W \times C$  such that 
$$[{\mathcal E}\vert_{ u \times C}] = \phi(u), \quad \forall u \in W.$$
{Step 1}: There exist two   sheaves  $\mathcal{Q}$ and ${\mathcal R}$ on $\mathscr{U} \times C$ such that, for each  $u = (([E_1],[E_2],[\sigma])$, with $u \in \mathscr{U}$,  we have
$${\mathcal Q}\vert_{ u \times C} \simeq {j_1}_*(E_1) \oplus {j_2}_*(E_2),
\quad  {\mathcal R}_{\vert u \times C} \simeq {j_p}_*({j_p}^*({j_2}_*(E_2))),$$
where $j_p \colon p \hookrightarrow C$
and  $j_i \colon C_i \hookrightarrow C$ are  the natural inclusions. 
\hfill\par
Consider the diagram 
\begin{equation}
\label{BIGDIAGRAM}
\xymatrix@C=3mm{
& \mathscr{U}\ar[dd]|\hole^<(0.25){\pi_{\mathscr{U}}} \ar[dl] & \mathscr{U}\times p \ar@/^1.8pc/@{_{(}->}[lld]_>(0.75){J_p}\ar[l]^{\simeq}\\
\mathscr{U}\times C\ar[dd]_{\Pi_{\mathscr{U}}} & & \\
& \mathcal{U}_{C_1}(r,d_1)\times \mathcal{U}_{C_2}(r,d_2)\ar[ld]\ar[r]^-{p_i} & \mathcal{U}_{C_i}(r,d_i)\ar[ld] & \mathcal{U}_{C_i}(r,d_i)\times q_i\ar[dl]^{\iota_i}\ar[l]_-{\simeq }\\
    \mathcal{U}_{C_1}(r,d_1)\times \mathcal{U}_{C_2}(r,d_2)\times C \ar@{->>}[r]_-{P_i}& \mathcal{U}_{C_i}(r,d_i)\times C & 
    \mathcal{U}_{C_i}(r,d_i)\times C_i\ar@{_{(}->}[l]^-{J_i}
}
\end{equation}

where the morphisms which appear have been defined as
\begin{equation}
J_i=\id_{\mathcal{U}_{C_i}(r,d_i)}\times j_i, \qquad 
P_i=p_i\times \id_C, \qquad 
\Pi_{\mathscr{U}}=\pi_\mathscr{U}\times \id_C, \quad J_p = \id_{\mathscr{U}}\times j_p.
\end{equation}
If, as before, we denote with $\mathcal{P}_i$ the Poincar\'e bundle on ${\mathcal U}_{C_i}(r,d_i) \times C_i$ we can set 
$$ {\mathcal Q_i} = \Pi_{\mathscr{U}}^*\Big({P_i}^*({J_i}_*({\mathcal P}_i))\Big) \quad  \mathcal Q = \mathcal Q_1 \oplus \mathcal Q_2,$$
and 
$$ {\mathcal R} = {J_p}_*({J_p}^*(Q_2)).$$
Note that $Supp({\mathcal R}) = \mathscr{U} \times p$. Moreover,  one can verify  that  if we identify $\mathscr{U} \times p $ with $\mathscr{U}$ we have:
\begin{equation}
\label{EQ:pullback}
J_p^*({\mathcal Q}_i) \simeq {\pi}_{\mathscr{U}}^*(p_i^*(\iota_i^*{\mathcal P}_i)),
\end{equation}
where $\iota_i \colon {\mathcal U}_{C_i}(r,d_i) \times q_i \hookrightarrow {\mathcal U}_{C_i}(r,d_i) \times C_i$. 

{Step 2}: There is an open subset $W \subset \mathscr{U}$ containing $u_0$ and a surjective map of sheaves
$$
\xymatrix{
\mathcal{Q}_1\oplus\mathcal{Q}_2|_{W\times C} \ar[r]^-{\Sigma_W} &
    \mathcal{R}|_{W\times C}
}
$$
whose kernel is the desired vector bundle $\mathcal{E}$ on $W\times C$.
\hfill\par

Let $\pi \colon {\mathbb P}({\mathcal F}) \to {\mathcal U}_{C_1}(r,d_1) \times {\mathcal U}_{C_2}(r,d_2)$ be the projective bundle defined in Lemma \ref{LEM:projbundle}.
First of all consider on ${\mathbb P}({\mathcal F})$ the tautological line bundle $\OO_{{\mathbb P}(\mathcal F)}(-1)$ which is, by definition, the subsheaf of $\pi^*({\mathcal F})$ whose fiber at $u \in {\mathbb P}({\mathcal F})$
is 
$$Span(\sigma) \subset \Hom(E_{1,q_1},E_{2,q_2}),$$
where  $u=(([E_1],[E_2]),[\sigma])$. 
We can choose $W$ to be an open subset of $\mathscr{U}$ containing the point $u_0$ and admitting a section $s \in \OO_{{\mathbb P}(\mathcal F)}(-1)(W)$ with 
$s(u)\not=0$, for any $u \in W$. 
\hfill\par

In particular $s$ induces a map of sheaves
\begin{equation}
    s  \colon {\pi}_{\mathscr{U}}^*p_1^*(\iota_1^*({\mathcal P}_1)))\vert_{W} \to {\pi}_{\mathscr{U}}^*p_2^*(\iota_2^*({\mathcal P}_2)))\vert_{ W}.
\end{equation}
 such that $s_u \colon E_{1,q_1} \to E_{2,q_2}$
is an isomorphism  and  $[s_u] = [\sigma]$ in ${\mathbb P}(\Hom(E_{1,q_1}, E_{2,q_2}))$.
\hfill\par

We can also define a morphism of sheaves 
\begin{equation}
\label{maps}
s-\id_2: {\pi}_{\mathscr{U}}^*p_1^*(\iota_1^*({\mathcal P}_1)))\vert_{W}\oplus {\pi}_{\mathscr{U}}^*p_2^*(\iota_2^*({\mathcal P}_2)))\vert_{W} \rightarrow {\pi}_{\mathscr{U}}^*p_2^*(\iota_2^*({\mathcal P}_2)))\vert_{W}
\end{equation}
where $\id_2$ is the identity of ${\pi}_{\mathscr{U}}^*p_2^*(\iota_2^*({\mathcal P}_2)))\vert_{W}$. 
\hfill\par

This allows us to define the map $\Sigma_W$ we are looking for. Indeed, since $Supp({\mathcal R}\vert_{ W \times C})= W \times p$, it  is enough  to give the  map on $W\times p$, which can be identified with   $W$.  Using the isomorphism \ref{EQ:pullback},   we have a diagram which defines $\Sigma_W$: 
$$
\xymatrix{
\mathcal{Q}_1\oplus\mathcal{Q}_2|_{W\times C} \ar[r]^-{\Sigma_W}\ar[d]_-{|_{W\times p}} &
    \mathcal{R}|_{W\times C}\ar[d]^-{|_{W\times p}}\\
J_p^*(\mathcal{Q}_1\oplus\mathcal{Q}_2|_{W\times C}) \ar[r]^-{\Sigma_{W}|_{W\times p}}\ar[d]_-{\simeq } & 
    J_p^*(\mathcal{R}|_{W\times C})\ar[d]^-{\simeq }\\
{\pi}_{\mathscr{U}}^*p_1^*(\iota_1^*({\mathcal P}_1)))\vert_{W}\oplus{\pi}_{\mathscr{U}}^*p_2^*(\iota_2^*({\mathcal P}_2)))\vert_{W} \ar[r]_-{s-\id_2} & 
    {\pi}_{\mathscr{U}}^*p_2^*(\iota_2^*({\mathcal P}_2)))\vert_{W}
}$$
By taking the kernel $\mathcal{E}$ of this map we concludes the second step of the proof of the claim. 
In particular, $\phi_{\vert {\mathscr U}}$ is a morphism.
\hfill\par

By construction, $\phi(\mathscr{U})$ is contained in ${\mathcal V}_C(w,r,\chi)_{d_1,d_2}$ and it coincide with the open subset of $w$-semistable vector bundles whose restrictions are semistable.  Moreover, ${\mathcal V}_C(w,r,\chi)_{d_1,d_2}$ is a dense open subset of ${\mathcal U}_C(w,r,\chi)_{d_1,d_2}$ (see \cite{S}). 
Notice that, by \ref{REM:dim}, we have: 
$$\dim (\phi(\mathscr{U}))=\dim(\mathscr{U}) = r^2(g_1+g_2-1) +1$$
 which is  the dimension of 
$\mathcal{U}_C(w,r,\chi)_{d_1,d_2}$, see 
Theorem \ref{modulispace}. 
This implies that $\phi$ is a dominant map. Hence, by a generic smoothness argument, we can conclude that $\phi_{\vert {\mathscr U}}$ is a birational morphism.
\end{proof}

\begin{corollary}
Let $C$ be a nodal curve with a single node  $p$ and two smooth irreducible components $C_i$ of genus $g_i \geq 1$. Assume  that the moduli space $\mathcal{U}_C(w,r,\chi)$ has an irreducible component corresponding to bidegree $(d_1,d_2)$ with  $d_1$ and $d_2$ coprime with $r$.  Then this component is birational to a projective bundle over the smooth variety
$\mathcal{U}_{C_1}(r,d_1)\times \mathcal{U}_{C_2}(r,d_2)$.
\end{corollary}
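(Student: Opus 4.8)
The plan is to deduce this statement directly from Theorem \ref{THM:MAIN}: the projective bundle $\PP(\mathcal{F})$ of Lemma \ref{LEM:projbundle} is already, by construction, a projective bundle over the smooth variety $\mathcal{U}_{C_1}(r,d_1)\times\mathcal{U}_{C_2}(r,d_2)$, so everything reduces to exhibiting a birational equivalence between it and the given component. Theorem \ref{THM:MAIN} provides exactly such an equivalence, but only under the numerical hypothesis $(\chi_1,\chi_2)\in\mathcal{W}_{r,r}$. Hence the whole point is to show that the mere existence of a component of bidegree $(d_1,d_2)$ forces this numerical condition. Throughout I set $\chi_i=d_i+r(1-g_i)$, so that $\chi=\chi_1+\chi_2-r$ as recorded in Proposition \ref{construction}.

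First I would invoke part (3) of Theorem \ref{modulispace}: for a generic polarization $w$ the irreducible components of $\mathcal{U}_C(w,r,\chi)$ are indexed precisely by the multidegrees satisfying \ref{eq:irreduciblecomponent}. Thus the hypothesis that a component of bidegree $(d_1,d_2)$ exists means that the inequalities $w_i\chi\le\chi_i\le w_i\chi+r$ hold for $i=1,2$. For $i=2$ one may rewrite $w_2\chi=w_2\chi+r-r$, so that these two pairs of inequalities become word for word the conditions \ref{numconditions} in the case $k=r$. Since by Lemma \ref{existencew} the set $\mathcal{W}_{r,r}$ is by definition the locus of pairs $(\chi_1,\chi_2)$ for which a polarization satisfying \ref{numconditions} (with $k=r$) exists, this shows $(\chi_1,\chi_2)\in\mathcal{W}_{r,r}$, and the hypotheses of Theorem \ref{THM:MAIN} are met.

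With the numerical condition secured, Theorem \ref{THM:MAIN} furnishes a polarization and a birational map $\phi$ from $\PP(\mathcal{F})$ onto the component of bidegree $(d_1,d_2)$, whose image of the open set $\mathscr{U}$ is the dense locus of $w$-stable vector bundles with (semi)stable restrictions. The one point requiring care is that the polarization handed to us in the statement need not coincide with the one produced inside Theorem \ref{THM:MAIN}. Here I would argue that the birational type of the component is constant throughout the chamber of polarizations satisfying \ref{eq:irreduciblecomponent} for $(d_1,d_2)$: by part (2) of Theorem \ref{modulispace}, a vector bundle whose restrictions to $C_1$ and $C_2$ are stable is $w$-stable for every such polarization, so the dense open locus $\mathcal{V}_C(w,r,\chi)_{d_1,d_2}$ of such bundles — which is exactly $\phi(\mathscr{U})$ — is shared by all these moduli spaces. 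Consequently the given component is birational to $\PP(\mathcal{F})$, and Lemma \ref{LEM:projbundle} identifies the latter as a projective bundle over $\mathcal{U}_{C_1}(r,d_1)\times\mathcal{U}_{C_2}(r,d_2)$, which finishes the argument.

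The genuinely delicate step is this last polarization bookkeeping, namely verifying that the given $w$ and the one constructed in Theorem \ref{THM:MAIN} lie in the same chamber so that the two components agree on the common dense open set of bundles with stable restrictions; the rest is a formal transcription of Theorem \ref{THM:MAIN} together with the projective-bundle structure of Lemma \ref{LEM:projbundle}.
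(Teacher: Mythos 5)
Your proposal is correct, and its core reduction is exactly what the paper intends: the corollary is stated with no separate proof, as an immediate consequence of Theorem \ref{THM:MAIN} and Lemma \ref{LEM:projbundle}, and your observation that the existence of a component of bidegree $(d_1,d_2)$ forces, via Theorem \ref{modulispace}(3), the inequalities \ref{eq:irreduciblecomponent} --- which for $k=r$ are word for word the conditions \ref{numconditions}, so that $(\chi_1,\chi_2)\in\mathcal{W}_{r,r}$ --- is precisely the unwritten bookkeeping. Where you diverge is in handling the polarization: you invoke Theorem \ref{THM:MAIN} as a black box, which only yields birationality for \emph{some} polarization $w'$, and then patch the mismatch with a chamber-invariance argument. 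That detour is avoidable, and the paper's implicit reading avoids it: in the proof of Theorem \ref{THM:MAIN} the hypothesis $(\chi_1,\chi_2)\in\mathcal{W}_{r,r}$ is used \emph{only} through Lemma \ref{existencew} to produce a polarization satisfying the $k=r$ inequalities; once such a $w$ is handed to you --- as it is by the corollary's hypothesis --- Corollary \ref{COR:NECISSUF} (i.e.\ Theorem \ref{modulispace}(2)) makes $E_u$ $w$-stable for every $u\in\mathscr{U}$, and the remainder of the proof (injectivity of $\phi|_{\mathscr{U}}$, the local universal family making it a morphism, and the dimension count giving dominance) never uses the polarization again, so the theorem's proof runs verbatim for the given $w$. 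Your chamber argument is morally sound but, as written, leans on a point the paper never establishes: that the common locus of bundles with stable restrictions inside the two moduli spaces (for $w$ and for $w'$) is identified as a \emph{variety}, not merely as a set of $S_w$-equivalence classes, which requires an appeal to the coarse moduli property; note also the small slip that $\phi(\mathscr{U})$ is not ``exactly'' $\mathcal{V}_C(w,r,\chi)_{d_1,d_2}$ --- the latter is the full vector-bundle locus, while $\phi(\mathscr{U})$ is the open sublocus of bundles with semistable restrictions, as stated in the proof of Theorem \ref{THM:MAIN}. In short: your route buys the convenience of citing Theorem \ref{THM:MAIN} wholesale at the cost of an extra, genuinely delicate step; re-instantiating the theorem's proof with the given polarization is shorter and stays entirely within what the paper proves.
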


Note that $\phi$ provides a desingularization of the component ${\mathcal U}_C(w,r,\chi)_{d_1,d_2}$. 

If the genus of the curve $C_i$ is big enough, we can be more precise about the domain of the rational map $\phi$.
\hfill\par
Assume that $g_i > r+1$, then   by Lemma \ref{LEM:OPENSTABLE}
  the locus of vector bundles of ${\mathcal U}_{C_i}(r,d_i)$ which are $(0,r)$-stable is a non empty open subset of ${\mathcal U}_{C_i}(r,d_i)$, let us denote it by $V_i$.
\begin{definition} We will denote by ${\mathscr V}$ the open subset  $\pi^{-1}(V_1 \times V_2)$  in ${\mathbb P}({\mathcal F})$.
\end{definition}
By construction,
$\mathscr{V} $ is a projective bundle over  $ V_1 \times V_2$.

\begin{theorem}
\label{THM:2}
Assume that the hypothesis of Theorem \ref{THM:MAIN} hold. Moreover, assume that $g_i > r+1$ and $(\chi_1, \chi_2) \in {\mathcal W}_{r}$. Then there exists a polarization $w$ such that the map $\phi$ sending $u$ to $[E_u]$ is a birational map such that ${\phi}\vert_{\mathscr{U}\cup \mathscr{V}}$ is a morphism. 
\end{theorem}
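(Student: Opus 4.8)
The plan is to obtain birationality for free from Theorem \ref{THM:MAIN} and then to extend the regularity argument given there from $\mathscr{U}$ to all of $\mathscr{U}\cup\mathscr{V}$. First I would use the extra hypothesis $(\chi_1,\chi_2)\in\mathcal{W}_r$ together with Remark \ref{allk} to fix a \emph{single} polarization $w$ satisfying the numerical conditions \ref{numconditions} for every $k=1,\dots,r$ at once. Since $\mathcal{W}_r\subseteq\mathcal{W}_{r,r}$, this $w$ meets the hypotheses of Theorem \ref{THM:MAIN}, whose proof only ever uses a polarization fulfilling \ref{numconditions} for $k=r$; hence for this very $w$ the map $\phi$ is already birational and $\phi|_{\mathscr{U}}$ is an injective morphism. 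It therefore remains only to prove that $\phi$ is regular at the points of $\mathscr{V}\setminus\mathscr{U}$.

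Next I would check that $\phi$ is at least well defined on $\mathscr{V}$. Take $u_0=(([E_1],[E_2]),[\sigma])\in\mathscr{V}$ with $\Rk\sigma=k\le r-1$. By definition of $V_i$ both $E_1$ and $E_2$ are $(0,r)$-stable, so in particular $E_1$ is $(0,k)$-stable (as noted after Proposition \ref{PROP:STABILITY}) and $E_2$ is $(0,r)$-stable. Because the chosen $w$ satisfies \ref{numconditions} for this $k$, Proposition \ref{PROP:STABILITY} applies and gives that $E_{u_0}$ is $w$-stable; thus $[E_{u_0}]\in\mathcal{U}_C(w,r,\chi)_{d_1,d_2}$ and $\phi(u_0)$ makes sense for every $u_0\in\mathscr{V}$.

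For regularity I would repeat, essentially verbatim, the family construction of Theorem \ref{THM:MAIN}, noting first that the sheaves $\mathcal{Q}=\mathcal{Q}_1\oplus\mathcal{Q}_2$ and $\mathcal{R}$ of that proof are in fact defined on all of $\PP(\mathcal{F})\times C$, since their construction never referred to $\mathscr{U}$. Around $u_0$ I choose an open $W\subseteq\PP(\mathcal{F})$ on which the tautological bundle $\OO_{\PP(\mathcal{F})}(-1)$ has a nowhere-vanishing section $s$, form the induced map $s-\id_2$ and the corresponding map of sheaves $\Sigma_W\colon\mathcal{Q}|_{W\times C}\to\mathcal{R}|_{W\times C}$, and set $\mathcal{E}=\ker\Sigma_W$. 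The crucial point is that the construction of $\Sigma_W$ never used that $\sigma$ is an isomorphism: its surjectivity comes from the identity summand $\id_2$ on the $E_{2,q_2}$-factor, exactly as in diagram \ref{EQ:DEFsigmatilde}, and therefore holds for every rank of $\sigma$. Since $\mathcal{Q}|_{W\times C}$ and the skyscraper-type sheaf $\mathcal{R}|_{W\times C}$ are both flat over $W$, the short exact sequence $0\to\mathcal{E}\to\mathcal{Q}|_{W\times C}\to\mathcal{R}|_{W\times C}\to 0$ forces $\mathcal{E}$ to be flat over $W$ as well, and restricting it to $u\times C$ recovers $\ker\tilde\sigma=E_u$. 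Thus $\mathcal{E}$ defines a morphism $W\to\mathcal{U}_C(w,r,\chi)_{d_1,d_2}$ agreeing with $\phi$, so $\phi$ is regular at $u_0$. As regularity is local and every point of $\mathscr{U}\cup\mathscr{V}$ is covered, $\phi|_{\mathscr{U}\cup\mathscr{V}}$ is a morphism.

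The step I expect to be the only genuine obstacle is the flatness of $\mathcal{E}$ over $W$ at the jumping points where $\Rk\sigma$ drops: one must ensure that forming the kernel still commutes with restriction to fibres. This is guaranteed by the two-out-of-three flatness in the short exact sequence above (flatness of $\mathcal{Q}$ and $\mathcal{R}$ over $W$ makes the relevant $\mathrm{Tor}_1$ vanish), which is precisely why $\Sigma_W$ must be arranged to be surjective with flat cokernel; everything else is a transcription of the computation already carried out for $\mathscr{U}$ in Theorem \ref{THM:MAIN}.
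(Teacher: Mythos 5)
Your proposal is correct and takes essentially the same route as the paper: fix a single polarization for all $k=1,\dots,r$ via Remark \ref{allk}, inherit birationality and regularity on $\mathscr{U}$ from Theorem \ref{THM:MAIN} since $\mathcal{W}_r\subset\mathcal{W}_{r,r}$, get well-definedness on $\mathscr{V}$ from Proposition \ref{PROP:STABILITY}, and obtain regularity by rerunning the family construction of Theorem \ref{THM:MAIN}. Where the paper simply says to proceed as in that proof with $\mathscr{U}$ replaced by $\mathscr{V}$, you usefully make explicit the two points that justify this (surjectivity of $\Sigma_W$ via the $\id_2$ summand independently of $\Rk\sigma$, and flatness of the kernel $\mathcal{E}$ by two-out-of-three so that its fibrewise restriction recovers $E_u$), which is exactly what the paper's appeal tacitly assumes.
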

\begin{proof}
Since  $(\chi_1,\chi_2) \in {\mathcal W}_{r}$ then, 
by Remark \ref{allk}, there exists  a polarization $w$
such that conditions \ref{numconditions} hold for any  $k = 1, \cdots ,r$.  In particular, as ${\mathcal W}_r \subset {\mathcal W}_{r,r}$,  Theorem  \ref{THM:MAIN} holds:  $\phi$ is a birational map which  is defined on the open subset ${\mathscr U}$.
\hfill\par
Assume that $u \in {\mathscr V}$ and $u \not\in {\mathcal U}$. Then $u = (([E_1],[E_2]),[\sigma])$, with
$([E_1],[E_2]) \in V_1 \times V_2$ and $\Rk \sigma \leq r-1$.
Since   $[E_i] \in V_i$, then by lemma \ref{PROP:STABILITY}, $E_u$ is $w$-semistable, hence $\phi$ is defined all over the open subset $\mathscr V$ too.
\hfill\par
To prove that  $ {\phi}\vert_{\mathscr{V}}$ is a  morphism, we can proceed as in the proof of Theorem \ref{THM:MAIN}, just by replacing ${\mathscr U}$ with 
${\mathscr V}$ and ${\mathcal U}_{C_i}(r,d_i)$ with 
$V_i$. 
\end{proof}

\section{Fixed-determinant moduli space}
\label{sec5}

Let $C$ be a smooth curve of genus $g \geq 1$ and $L \in \Pic^{d}(C)$.  We recall that the moduli space of semistable vector bundles of rank $r$ and determinant $L$  on $C$ is denoted by ${\mathcal SU}_{C}(r,L)$ and it is an irreducible and projective variety. 
It is the fiber of the determinant map:
$$ \det \colon {\mathcal U}_C(r,d) \to \Pic^d(C).$$
In this section we will investigate a similar subvariety of the moduli space ${\mathcal U}_C(w,r,\chi)_{d_1,d_2}$ for a nodal reducible curve with two irreducible component $C_i$. 
Fix  a pair  $(L_1,L_2)$ with  $L_i \in \Pic^{d_i}(C_i)$.  Note that there exists a unique line bundle $L$ on the nodal curve $C$ whose restriction to the component $C_i$ is $L_i$. Recall that ${\mathcal V}_C(w,r,\chi)_{d_1,d_2} \subset {\mathcal U}_C(w,r,\chi)_{d_1,d_2}$ is the open subset parametrizing $w$-semistable classes which are represented by vector bundles.   

\begin{definition}
Let $L$ be the line bundle on $C$ induced by the pair 
$(L_1,L_2)$.  We define ${\mathcal SU}_C(w,r,L) $ as the closure of 
$$\{ [E] \in {\mathcal V}_C(w,r,\chi)_{d_1,d_2}  \, \vert \, \det E = L \}$$
in ${\mathcal U}_C(w,r,\chi)_{d_1,d_2}$.
\end{definition}

If we assume that $r$ and $d_i$ are coprime,  then ${\mathcal SU}_{C_i}(r,L_i)$  is a  smooth irreducible  projective variety of dimension $(r^2-1)(g_i-1)$.  
As in Lemma \ref{LEM:projbundle}, we can 
define a vector  bundle  ${\mathcal F}_L$ on 
${\mathcal SU}_{C_1}(r,L_1) \times {\mathcal SU}_{C_2}(r,L_2)$ just by restricting ${\mathcal F}$. Then we can consider the associated projective bundle  ${\mathbb P}({\mathcal F}_L)$ and
$$\mathscr{U}_L = \mathscr{U} \cap {\mathbb P}({\mathcal F}_L),$$
a $\PGL(r)$-bundle on ${\mathcal SU}_{C_1}(r,L_1) \times {\mathcal SU}_{C_2}(r,L_2)$. We denote by $\phi_L$ the restriction of the morphism $\phi$ defined in Theorem \ref{THM:MAIN} to $\mathscr{U}_L$.
As a consequences of Theorem \ref{THM:MAIN},  we have the following:
\begin{corollary}
\label{COR1}
In the hypothesis of Theorem \ref{THM:MAIN}, the 
 map 
$$
\xymatrix{
\phi_L \colon {\mathbb P}({\mathcal F}_L) \ar@{-->}[r] & {\mathcal SU}_C(w,r,L) 
}$$
is a birational map, whose restriction ${\phi}_L\vert_{{\mathscr U}_L}$ is an injective morphism. 
\end{corollary}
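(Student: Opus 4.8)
The plan is to obtain the statement by restricting the injective birational morphism of Theorem \ref{THM:MAIN} to the fixed-determinant sublocus and then matching dimensions.

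First I would record how the determinant behaves under $\phi$. For $u=(([E_1],[E_2]),[\sigma])\in\mathscr{U}$ the sheaf $E_u$ is a vector bundle with ${E_u}_{\vert C_i}=E_i$ by Proposition \ref{construction}, hence $(\det E_u)_{\vert C_i}=\det E_i$. Since a line bundle on $C$ is determined by its restrictions to the two components (as recalled at the beginning of this section), we get $\det E_u=L$ if and only if $\det E_i=L_i$ for $i=1,2$, that is, if and only if $[E_i]\in\mathcal{SU}_{C_i}(r,L_i)$, i.e. if and only if $u\in\mathbb{P}(\mathcal{F}_L)$. Therefore
$$\phi(\mathscr{U}_L)=\{\,[E]\in\phi(\mathscr{U})\ \vert\ \det E=L\,\}\subseteq\mathcal{SU}_C(w,r,L).$$
The first assertion is then immediate: $\phi_L\vert_{\mathscr{U}_L}$ is the restriction of the injective morphism $\phi\vert_{\mathscr{U}}$ of Theorem \ref{THM:MAIN} to the subvariety $\mathscr{U}_L\subseteq\mathscr{U}$, so it is again an injective morphism, and its image lies in $\mathcal{SU}_C(w,r,L)$ by the computation above.

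Next I would compare dimensions. On one hand $\mathbb{P}(\mathcal{F}_L)$ is a $\mathbb{P}^{r^2-1}$-bundle over the irreducible smooth variety $\mathcal{SU}_{C_1}(r,L_1)\times\mathcal{SU}_{C_2}(r,L_2)$, so it is irreducible of dimension
$$(r^2-1)(g_1-1)+(r^2-1)(g_2-1)+(r^2-1)=(r^2-1)(g_1+g_2-1).$$
On the other hand $\mathcal{SU}_C(w,r,L)$ is a determinant slice of $\mathcal{U}_C(w,r,\chi)_{d_1,d_2}$, whose dimension is $r^2(p_a(C)-1)+1=r^2(g_1+g_2-1)+1$ by Theorem \ref{modulispace}; imposing the determinant, which ranges in the $(g_1+g_2)$-dimensional $\Pic^{d_1}(C_1)\times\Pic^{d_2}(C_2)$, lowers the dimension to $(r^2-1)(g_1+g_2-1)$. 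Thus the two dimensions coincide, and $\phi_L(\mathscr{U}_L)$ is an irreducible constructible set of full dimension inside $\mathcal{SU}_C(w,r,L)$.

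It then remains to show that $\phi_L(\mathscr{U}_L)$ is dense in $\mathcal{SU}_C(w,r,L)$; once this is granted, an injective dominant morphism from a dense open of $\mathbb{P}(\mathcal{F}_L)$ is birational and we are done. This density is the only point with genuine content, and I expect it to be the main obstacle. I would establish it by identifying, via $\phi\vert_{\mathscr{U}}$, the determinant on $\phi(\mathscr{U})\subset\mathcal{V}_C(w,r,\chi)_{d_1,d_2}$ with the composition $(\det\times\det)\circ\pi$ on $\mathscr{U}$; the latter is a fibration over $\Pic^{d_1}(C_1)\times\Pic^{d_2}(C_2)$ whose fibre over $(L_1,L_2)$ is exactly the irreducible $\mathscr{U}_L$, because each $\det\colon\mathcal{U}_{C_i}(r,d_i)\to\Pic^{d_i}(C_i)$ is an \'etale-locally trivial bundle with irreducible fibre $\mathcal{SU}_{C_i}(r,L_i)$. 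Hence the open subset $\phi(\mathscr{U}_L)$ of the determinant slice defining $\mathcal{SU}_C(w,r,L)$ is irreducible of the dimension computed above; to conclude I must only rule out that vector bundles with an \emph{unstable} restriction to some $C_i$ contribute a further full-dimensional component to the slice, which follows from a dimension estimate for that proper closed locus analogous to Lemma \ref{LEM:OPENSTABLE}. With density in hand, $\overline{\phi_L(\mathscr{U}_L)}=\mathcal{SU}_C(w,r,L)$ and $\phi_L$ is birational.
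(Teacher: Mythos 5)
Your skeleton (restrict the injective morphism of Theorem \ref{THM:MAIN} to $\mathscr{U}_L$, compare dimensions via the commutative square with $\psi=(\det\times\det)$, reduce everything to density of $\phi_L(\mathscr{U}_L)$ in the determinant slice) coincides with the paper's strategy for Corollary \ref{COR1}, but the one step you yourself flag as ``the only point with genuine content'' is left open, and the way you propose to close it would fail. Two problems. First, your claim that imposing the determinant ``lowers the dimension to $(r^2-1)(g_1+g_2-1)$'' is only a lower bound statement: the fiber-dimension theorem bounds the components of \emph{every} fiber from below and gives equality only over a dense open of the target, so the particular fiber $\psi^{-1}(L_1,L_2)$ could a priori be larger; likewise, knowing $Y=\mathcal{V}_C(w,r,\chi)_{d_1,d_2}\setminus\phi(\mathscr{U})$ is a proper closed subset does not bound $\dim\bigl(Y\cap\psi^{-1}(L_1,L_2)\bigr)$, since special fibers of $\psi|_Y$ can jump. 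Second, your fallback --- ``a dimension estimate for that proper closed locus analogous to Lemma \ref{LEM:OPENSTABLE}'' --- is not carried out and is not a routine analogue: that lemma concerns $(0,r)$-stability on a \emph{smooth} curve and is proved under the extra hypothesis $g_i>r+1$, which is not among the hypotheses of Theorem \ref{THM:MAIN} or Corollary \ref{COR1}; an estimate for the locus of $w$-semistable bundles on the nodal curve with unstable restriction would require genuinely new work and would risk importing a genus restriction the corollary does not have.

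The paper closes the gap instead with a homogeneity argument you are missing. Given $(L_1',L_2')$, choose $\xi_i\in\Pic^0(C_i)$ with $L_i\otimes\xi_i^{r}\simeq L_i'$, and let $\xi$ be the unique line bundle on $C$ with $\xi|_{C_i}\simeq\xi_i$; then $E\mapsto E\otimes\xi$ preserves $w$-semistability and gives an isomorphism $\psi^{-1}(L_1,L_2)\simeq\psi^{-1}(L_1',L_2')$. Since all fibers are isomorphic, the fiber-dimension theorem forces \emph{every} fiber to have pure dimension $(r^2-1)(g_1+g_2-1)$. Moreover, if some fiber had a second irreducible component $F_2$ besides the component $F_1$ containing the irreducible set $\phi(\mathscr{U}_L)=\phi(\mathscr{U})\cap\psi^{-1}(L_1,L_2)$, then $F_2\subset Y$, and homogeneity would place a component of full fiber dimension of $Y$ in every fiber, yielding
$$\dim Y\;\geq\;(r^2-1)(g_1+g_2-1)+g_1+g_2\;=\;r^2(g_1+g_2-1)+1\;=\;\dim\mathcal{V}_C(w,r,\chi)_{d_1,d_2},$$
contradicting the fact that $Y$ is proper. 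This single trick simultaneously gives equidimensionality of the slice, its irreducibility, and the density of $\phi_L(\mathscr{U}_L)$, with no hypothesis beyond those of Theorem \ref{THM:MAIN}. Replace your appeal to Lemma \ref{LEM:OPENSTABLE} by this tensoring argument and the rest of your proof goes through as written.
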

\begin{proof}
Note that ${\phi}_L\vert_{{\mathscr U}_L}$ is  a morphism and its image is the following subset:
$$Im \phi_L = \{ E \in \ {\mathcal V}_C(w,r,\chi)_{d_1,d_2} \ \vert \  {[E_{\vert C_i}]} \in {\mathcal SU}_{C_i}(r,L_i) \}.$$
In particular, $Im \phi_L \subseteq {\mathcal SU}_C(w,r,L)$.
\hfill\par
Consider the following map:
$$\psi \colon {\mathcal V}_C(w,r,\chi)_{d_1,d_2}  \to \Pic^{d_1}(C_1) \times \Pic^{d_2}(C_2),$$
sending $E \to (\det(E\vert_{ C_1}), \det (E\vert_{ C_2}))$, which fit into the following commutative diagramm:
\begin{equation}
\xymatrix{
{\mathscr U} \ar[r]^{\phi} \ar[d]_{{\pi}_{\mathscr U}} & {\mathcal V}_C(w,r,\chi)_{d_1,d_2}  \ar[d]^{\psi}\\
{\mathcal U}_{C_1}(r,d_1) \times {\mathcal U}_{C_2}(r,d_2) \ar[r]_{\det_1 \times  \det_2} & \Pic^{d_1}(C_1) \times \Pic^{d_2}(C_2)\\
}
\end{equation}
It follows immediately that $\psi$ is a surjective morphism and $Im {\phi}_L \subset \psi^{-1}(L_1,L_2)$.
\hfill\par
We claim that $\psi$ has irreducible fibers of dimension  $(r^2-1)(g_1+g_2-1)$. 
\hfill\par
First of all we prove that any two fibers of $\psi$ are isomorphic.  Let  $(L_1,L_2) $ and $(L_1',L_2') $  in $\Pic^{d_1}(C_1) \times \Pic^{d_2}(C_2)$, then there exists $\xi_i \in \Pic^0(C_i) $ such that
$L_i \otimes \xi_i^{r} \simeq L_i'$. 
Let $\xi$ be the unique line bundle on $C$ such that $\xi_{\vert C_i} \simeq \xi_i$.
The natural map
$$ \psi^{-1}(L_1,L_2) \to \psi^{-1}(L_1',L_2')$$
sending $E$ to $E \otimes \xi$  preserves  $w$-semistability and  it  gives  an  isomorphism of the fibers. In particular, from fiber dimension Theorem (see \cite{H}, p.95), this  implies that    any fiber has pure dimension  $(r^2-1)(g_1+g_2-1)$. 
\hfill\par
Finally  we prove that any fiber is irreducible. Let $Y = {\mathcal V}_C(w,r,\chi)_{d_1,d_2} \setminus \phi(\mathscr{U})$, it is a proper subvariety of ${\mathcal V}_C(w,r,\chi)_{d_1,d_2} $.
Assume that the fiber of $\psi$ over $(L_1,L_2)$ is reducible.  Let $F_1$ be the irreducible component containing   $\phi(\mathscr{U}_L)$, then there exists an irreducible component $F_2 \subset Y$.
So the restriction of $\psi$ to $Y$ is a  surjective morphism whose fibers have dimension $(r^2-1)(g_1+g_2-1)$. This implies that $\dim Y = \dim {\mathcal V}_C(w,r,\chi)_{d_1,d_2} $, which is impossible. 
\hfill\par
This allows us to conclude that ${\mathcal SU}_C(w,r,L)$ is irreducible too and 
$\phi_L$ is a birational morphism. 
\end{proof}

\begin{theorem}
\label{THM2}
In the hypothesis of Theorem \ref{THM:MAIN},
${\mathcal SU}_C(w,r,L) $ is a rational variety. 
\end{theorem}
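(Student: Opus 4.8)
The plan is to combine the birational description furnished by Corollary \ref{COR1} with the known rationality of the fixed-determinant moduli spaces on the two smooth components. By Corollary \ref{COR1}, the map $\phi_L$ exhibits ${\mathcal SU}_C(w,r,L)$ as birational to the projective bundle ${\mathbb P}({\mathcal F}_L)$ over ${\mathcal SU}_{C_1}(r,L_1)\times{\mathcal SU}_{C_2}(r,L_2)$. Since rationality is a birational invariant, it therefore suffices to prove that ${\mathbb P}({\mathcal F}_L)$ is a rational variety.

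First I would use that ${\mathbb P}({\mathcal F}_L)$, being the projectivization of the vector bundle ${\mathcal F}_L$ of rank $r^2$, is Zariski-locally trivial over its base and hence birational to the product $\left({\mathcal SU}_{C_1}(r,L_1)\times{\mathcal SU}_{C_2}(r,L_2)\right)\times{\mathbb P}^{r^2-1}$. In this way the rationality of ${\mathbb P}({\mathcal F}_L)$ is reduced to the rationality of the base ${\mathcal SU}_{C_1}(r,L_1)\times{\mathcal SU}_{C_2}(r,L_2)$, the extra factor ${\mathbb P}^{r^2-1}$ being harmless.

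Next, I would invoke that, under the hypothesis of Theorem \ref{THM:MAIN}, $r$ is coprime with each $d_i$, so that each factor ${\mathcal SU}_{C_i}(r,L_i)$ is a smooth projective variety which is moreover \emph{rational} by the result of \cite{KS} in the coprime case. A finite product of rational varieties is rational, and taking the further product with a projective space preserves rationality; hence ${\mathbb P}({\mathcal F}_L)$ is rational. Transporting this conclusion along the birational map $\phi_L$ yields the rationality of ${\mathcal SU}_C(w,r,L)$, as desired.

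The argument is essentially an assembly of standard facts, so I do not expect a serious obstacle: the only nontrivial external input is the rationality of ${\mathcal SU}_{C_i}(r,L_i)$ for $r$ and $d_i$ coprime, which we borrow from \cite{KS}, while the reduction to a product with a projective space is immediate from the projective-bundle structure of ${\mathbb P}({\mathcal F}_L)$ together with the birationality already established in Corollary \ref{COR1}.
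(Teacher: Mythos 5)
Your proposal is correct and follows essentially the same route as the paper: the paper likewise deduces rationality from the rationality of each ${\mathcal SU}_{C_i}(r,L_i)$ in the coprime case (citing \cite{KS} together with \cite{N1}, \cite{N2}), the ${\mathbb P}^{r^2-1}$-bundle structure over their product, and the birationality of $\phi_L$ from Corollary \ref{COR1}. Your explicit remark that the projectivization of a vector bundle is Zariski-locally trivial, hence birational to the product with ${\mathbb P}^{r^2-1}$, just spells out the step the paper leaves implicit.
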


\begin{proof}
By hypothesis $d_i$ and $r$  are coprime, then the moduli space  ${\mathcal SU}_{C_i}(r,L_i)$ is rational for any line bundle $L_i \in \Pic^{d_i}(C_i)$, see \cite{KS}, \cite{N1} and \cite{N2}.
Since $\mathscr{U}_L$ is a ${\mathbb P}^{r^2-1}$-bundle over the product 
${\mathcal SU}_{C_1}(r, L_1) \times {\mathcal SU}_{C_2}(r,L_2)$, then it is a rational variety too. 
The assertion follows from  corollary \ref{COR1}. 
\end{proof}

\begin{bibdiv}
\begin{biblist}

\bib{At}{article}{
    author = {Atiyah, M. F.},
     title = {Vector bundles over an elliptic curve},
   JOURNAL = {Proc. London Math. Soc. (3)},
  FJOURNAL = {Proceedings of the London Mathematical Society. Third Series},
    VOLUME = {7},
      YEAR = {1957},
     PAGES = {414--452},
}

\bib{BDS16}{article}{
   author={Barik, P.},
   author={Dey, A.},
   author={Suhas, B. N.},
   title={On the rationality of Nagaraj-Seshadri moduli space},
   journal={Bull. Sci. Math.},
   volume={140},
   date={2016},
   number={8},
   pages={990--1002},
   issn={0007-4497},
   doi={10.1016/j.bulsci.2016.06.001},
}

\bib{B}{article}{
    author={Beauville, A.},
    title={Theta Functions, Old and New},
    note={Open Problems and Surveys of Contemporary Mathematics},
    journal={SMM},
    volume={6},
    date={2013}
}

\bib{BB}{article}{
   author={Bolognesi, M.},
   author={Brivio, S.},
   title={Coherent systems and modular subavrieties of $\mathscr{SU}_C(r)$},
   journal={Internat. J. Math.},
   volume={23},
   date={2012},
   number={4},
   pages={1250037, 23},
   issn={0129-167X},
   doi={10.1142/S0129167X12500371},
}

\bib{BhB14}{article}{
   author={Bhosle, U. N.},
   author={Biswas, I.},
   title={Brauer group and birational type of moduli spaces of torsionfree sheaves on a nodal curve},
   journal={Comm. Algebra},
   volume={42},
   date={2014},
   number={4},
   pages={1769--1784},
   issn={0092-7872},
   doi={10.1080/00927872.2012.749263},
}

\bib{BF1}{article}{
author={Brivio, S.},
   author={Favale, F. F.},
   title={Genus 2 curves and generalized theta divisors},
   journal={Bull. Sci. Math.},
   volume={155},
   date={2019},
   pages={112--140},
   issn={0007-4497},
   doi={10.1016/j.bulsci.2019.05.002},
}

\bib{BF2}{article}{
   author={Brivio, S.},
   author={Favale, F. F.},
   title={On Kernel Bundle over reducible curves with a node},
   date={2019}
   note={Preprint (2019)}
}

\bib{B3}{article}{
   author={Brivio, S.},
   title={A note on theta divisors of stable bundles},
   journal={Rev. Mat. Iberoam.},
   volume={31},
   date={2015},
   number={2},
   pages={601--608},
   issn={0213-2230},
   review={\MR{3375864}},
   doi={10.4171/RMI/846},
}
\bib{B2}{article}{
author={Brivio, S.},
title= {Families of vector bundles and linear systems of theta
              divisors},
   JOURNAL = {Internat. J. Math.},
    VOLUME = {28},
      date = {2017},
    NUMBER = {6},
     PAGES = {1750039, 16}
}

\bib{B1}{article}{
    author={Brivio, S.},
    title={Theta divisors and the geometry of tautological model},
   JOURNAL = {Collect. Math.},
    VOLUME = {69},
      date= {2018},
    NUMBER = {1},
     PAGES = {131--150}
}

\bib{BV}{article}{
   author={Brivio, S.},
   author={Verra, A.},
   title={Pl\"{u}cker forms and the theta map},
   journal={Amer. J. Math.},
   volume={134},
   date={2012},
   number={5},
   pages={1247--1273},
   issn={0002-9327},
   review={\MR{2975235}},
   doi={10.1353/ajm.2012.0034},
}

\bib{DS18}{article}{
   author={Dey, A.},
   author={Suhas, B. N.},
   title={Rationality of moduli space of torsion-free sheaves over reducible
   curve},
   journal={J. Geom. Phys.},
   volume={128},
   date={2018},
   pages={87--98},
   issn={0393-0440},
   doi={10.1016/j.geomphys.2018.02.009},
}

\bib{H}{book}{
   author={Hartshorne, R.},
   title={Algebraic geometry},
   note={Graduate Texts in Mathematics, No. 52},
   publisher={Springer-Verlag, New York-Heidelberg},
   date={1977},
   pages={xvi+496},
   isbn={0-387-90244-9},
}

\bib{KS}{article}{
    author={King, A.},
    author={Schofield, A.},
    title={Rationality of moduli of vector bundles on curves},
    Journal={Indag.Math.(NS)},
    volume={10},
    number={4},
    date={1999},
    pages={519--535}
}

\bib{M}{book}{
AUTHOR = {Mumford, D.},
     TITLE = {Geometric invariant theory},
    SERIES = {Ergebnisse der Mathematik und ihrer Grenzgebiete, Neue Folge,
              Band 34},
 PUBLISHER = {Springer-Verlag, Berlin-New York},
      date= {1965},
     PAGES = {vi+145},}
  
\bib{S0}{article}{
author={Seshadri, C.S.},
title={Space of unitary vector bundles on a compact Riemann surface},
Journal={Ann.of Math.},
volume={85},
date={1967},
pages={303-336}
}

\bib{S}{book}{
    author={Seshadri, C.S.},
    title={Fibr\'es vectoriels sur les courbes alg\'ebriques},
    series={Ast\'erisque},
    publisher={Soci\'et\'e Math\'ematique de France},
    volume={96},
    date={1982},
    edition={Conf\'erences \'a l'E.N.S.-Juin 1980, r\'edig\'ees par J.M. Drezet},
	}

\bib{N}{book}{
author={Newstead, P.E.},
title={Introduction to Moduli problems and orbit spaces},
note={Tata Institute of fundamental Research, Bombay},
edition={Lectures on Mathematics and Physics},
volume={51},
date={1978}
}

 \bib{N2}{article}{
    author={Newstead, P.E. },
    title={Correction to "Rationality of moduli spaces of stable bundles"},
    Journal={Math. Ann.},
    volume={249},
    date={1980},
    pages={281-282}
    }
    
\bib{N1}{article}{
    author={Newstead, P.E. },
    title={Rationality of moduli spaces of stable bundles},
    Journal={Math. Ann.},
    volume={215},
    date={1975},
    pages={251-268}}

\bib{NR}{book}{
    author={Narasimhan M.S.},
    author={Ramanan S.},
    title={Geometry of Hecke Cycles-I},
    note={C. P. Ramanujam—a tribute, pp. 291–345},
    note={Tata Inst. Fund. Res. Studies in Math.}, 
    edition={Springer, Berlin-New York}, 
    volume={8},
    date={1978},
    pages={291-345}
}  

\bib{R}{article}{
    author={Ramanan, S.},
    title={The moduli spaces of vector bundles over an algebraic  curve},
    journal={Math. Ann.},
    volume={200},
    date={1973},
    pages={69--84},
    publisher={Springer-Verlag},
    }
    
\bib{TR}{article}{
   author={Russo, B.},
   author={Teixidor i Bigas, M.},
   title={On a conjecture of Lange},
   journal={J. Algebraic Geom.},
   volume={8},
   date={1999},
   number={3},
   pages={483--496},
   issn={1056-3911},
}

\bib{T2}{article}{
    author={Teixidor i Bigas, M.},
    title={Vector bundles on reducible curves and applications},
    journal={Grassmannians, moduli spaces and vector bundles, Clay Math. Proceedings},
    volume={14},
    date={2011},
    pages={169--180},
    publisher={Amer. Math. Soc. Providence, RI},
}
    
\bib{T1}{article}{
    author={Teixidor i Bigas, M.},
    title={Moduli spaces of Vector bundles on reducible curves},
    journal={Amer.J. of Math.},
    volume={117},
    number={1},
    date={1995},
    pages={125--139},
    publisher={The Johns Hopkins University Press}
    }
    
\bib{Tu}{article}{
    author={Tu, L. W.},
    title={Semistable Bundles over an Elliptic Curve},
    journal={Ad.in Math.},
    volume={98},
    date={1993},
    pages={1--26}
}
\end{biblist}
\end{bibdiv}

\end{document}